\documentclass[12pt,leqno,amscd,amsfonts,amssymb]{amsart}
\usepackage{amsmath,amssymb,amscd}
\usepackage{mathrsfs}
\usepackage{hyperref}
\usepackage{bookmark}
\usepackage{bm}
\usepackage{amsthm}
\usepackage{bookmark}
\usepackage{cite}

\def\la{\lambda}
\def\a{\alpha}
\def\b{\beta}
\def\r{\gamma}

\def\Z{\mathbb{Z}}
\def\N{\mathbb{N}}

\def\C{\mathbb{C}}

\numberwithin{equation}{section}
\newtheorem{theo}{Theorem}[section]
\newtheorem{defi}[theo]{Definition}
\newtheorem{coro}[theo]{Corollary}
\newtheorem{lemm}[theo]{Lemma}
\newtheorem{prop}[theo]{Proposition}

\allowdisplaybreaks

\begin{document}
	
	\title[A new class of irreducible  modules over the BMS-Kac-Moody algebra]{A new class of irreducible  modules over the BMS-Kac-Moody algebra}
	
	\author{Qiu-Fan Chen, Yi-Jun Bai}
	
	\address{Department of Mathematics, Shanghai Maritime University,
		Shanghai, 201306, China.}
	\email{chenqf@shmtu.edu.cn}
	
	\address{Department of Mathematics, Shanghai Maritime University,
		Shanghai, 201306, China.}
	\email{202431010011@stu.shmtu.edu.cn}
	
	\subjclass[2020]{17B10, 17B65, 17B68, 17B70}
	
	\keywords{BMS-Kac-Moody algebra, tensor product, non-weight module}
	
	\thanks{This work is supported by Science and Technology Commission of Shanghai Municipality (Grant No. 25ZR1402183) and National Natural Science Foundation of China (Grant Nos. 12271345 and 12361006).}
	
\begin{abstract}
In this paper, we construct a class of non-weight modules over the BMS-Kac-Moody algebra by taking tensor products of  finitely many  irreducible modules $\Phi(\lambda,\a,\b,\r,h(t))$ with irreducible restricted modules. We obtain the necessary and sufficient conditions for these tensor product modules to be irreducible, and determine the corresponding conditions for two such modules to be isomorphic. Moreover, we compare these modules with other known non-weight modules, showing that these irreducible modules are new.
\end{abstract}
	
\maketitle
	
\tableofcontents
	
\section{Introduction}
Throughout the paper, we denote by $\C ,\,\C^*,\,\Z,\,\Z_+,\,\N$ the sets of complex numbers, nonzero complex numbers, integers, nonnegative integers and positive integers, respectively. All algebras (modules, vector spaces) are assumed to be  over $\C$. For a Lie algebra $\mathfrak{g}$, we use $U(\mathfrak{g})$ to denote the universal enveloping algebra of $\mathfrak{g}$. Let $\C[t]$ be the polynomial algebra in variable $t$ and $\C[s,t]$ the polynomial algebra in variables $s$ and $t$, respectively.
	
Infinite-dimensional symmetries play a prominent role in  various areas of physics. In particular, the asymptotic symmetry group of three-dimensional Einstein gravity has been shown to be generated by the BMS (Bondi-Metzner-Sachs) algebra, which is an infinite-dimensional Lie algebra with a basis $\{L_m, W_m, C_L, C_W\mid m\in\Z\}$ and the nontrivial defining relations
\begin{equation*}
\aligned
&[L_m, L_n]=(n-m)L_{m+n}+\frac{C_L}{12}(m^3-m)\delta_{m+n,0},\\
&[L_m,W_n]=(n-m)W_{m+n}+\frac{C_W}{12}(m^3-m)\delta_{m+n,0},\\
&[W_m,W_n]=0, \quad \forall m, n\in\Z.
\endaligned
\end{equation*}
These $L_m$ and $W_m$ correspond respectively to super-rotations and super-translations at the quantum level. The BMS algebra is  precisely  the $W(2,2)$ algebra, which was first introduced in \cite{ZD} for the classification of moonshine type vertex operator algebras generated by two weight 2 vectors.  We note that the BMS algebra is crucial in establishing holography theory in asymptotic flat spacetimes, see\cite{B,BF,H}. Owing to an additional anisotropic scaling symmetry, the BMS symmetry has been extended to a symmetry generated by a new type of BMS-Kac-Moody algebra with a nonvanishing $u(1)$ Kac-Moody current\cite{YC}. Motivated by \cite{YC}, an infinite-dimensional BMS-Kac-Moody algebra $\mathfrak L$ with two $u(1)$  Kac-Moody currents without central extensions was introduced in \cite{LS} and many irreducible restricted modules over $\mathfrak L$ were constructed therein. Explicitly, $\mathfrak L$ is a Lie algebra with a basis $\{L_m, W_m, I_m, J_m,  \mid m\in\Z\}$ and the nontrivial Lie brackets defined by
\begin{equation*}
\aligned
&[L_m,L_n]=(n-m)L_{m+n},\quad [L_m,W_n]=(n-m)W_{m+n},\\
&[L_m,I_n]=nI_{m+n},\quad [L_m,J_n]=nJ_{m+n},\\
&[W_m,I_n]=-nJ_{m+n},\quad \forall\, m,n\in\Z.
\endaligned
\end{equation*}
In addition to the BMS algebra,  $\mathfrak L$  contains several other well-known subalgebras, such as the Lie subalgebra spanned by $\{L_m\mid m\in\Z\}$ is the Witt algebra and the Lie subalgebra spanned by $\{L_m, I_m\mid m\in\Z\}$ and   $\{L_m, J_m\mid m\in\Z\}$ are two copies of the (centerless) Heisenberg-Virasoro algebra. Moreover, the Lie subalgebra spanned by $\{I_m, J_m\mid m\in\Z\}$ is a commutative ideal of $\mathfrak L$. Notice that the Lie subalgebra $\C L_0\oplus\C W_0$  is  the Cartan subalgebra  (modulo center) of $\mathfrak{L}$. The construction of 
$\mathfrak L$ suggests that such an infinite-dimensional algebra is of significant interest for both structural and representation theory. Moreover, $\mathfrak L$ admits a connection to the two-dimensional supersymmetric Galilean conformal algebra \cite{MR}, as it arises as the even part of that algebra.

In recent years, various constructions of irreducible non-Harish-Chandra modules and non-weight modules have been introduced.  In particular, J. Nilsson \cite{N} constructed  a class of  $\mathfrak{sl_{n+1}}$-modules that are free of rank one when restricted to the Cartan subalgebra, which were also introduced by a different method\cite{TZ}.  Since then, such non-weight modules, commonly referred to as $U(\mathfrak{h})$-free modules, have been extensively studied. More specifically, the authors of  \cite{CG} classified the $U(\C L_0\oplus\C W_0)$-free modules of rank one for  $\mathfrak L$, denoted by  $\Phi(\lambda,\a,\b,\r,h(t))$. Furthermore, the irreducibility and isomorphism classes of these modules were determined therein. Restricted modules for a $\Z$-graded Lie algebra are those in which every vector is annihilated by the sufficiently large positive part of the algebra. Whittaker modules and highest weight modules are restricted modules, and, in some sense,  restricted modules can be viewed as a generalization of both  Whittaker modules and highest weight modules. A fundamental step in the study of restricted modules is the classification of all irreducible restricted modules for a given Lie algebra. This was first accomplished for the Virasoro algebra  by V. Mazorchuk  and K. Zhao in \cite{MZ}, and subsequently, analogous results have been obtained for many other $\Z$-graded Lie algebras. For the algebra $\mathfrak L$,  partial results on irreducible restricted modules have been obtained in \cite{LS}.

It is well known that an important way to construct new modules over an algebra is to consider the linear tensor product of known modules over the algebra, see \cite{CGZ,CHSY,CY,GLW,TZ2,TZ3,Z}. The aim of this paper is to construct new irreducible non-weight  $\mathfrak L$-modules by taking tensor products of finitely many irreducible  modules $\Phi(\lambda,\a,\b,\r,h(t))$ with irreducible restricted modules.
	
This paper is organized as follows. Section 2 recalls the definitions of the various modules considered herein, along with some basic known results. In Section 3, we construct a family of $\mathfrak L$-modules by taking tensor products of a finite number of $\Phi(\lambda,\a,\b,\r,h(t))$ with an arbitrary irreducible restricted module $V$. We then establish necessary and sufficient conditions for the irreducibility of these tensor products and investigate their submodule structures in the reducible cases. Section 4 is devoted to determining the isomorphism criteria for any two such irreducible tensor modules. In Section 5, we compare the irreducible modules constructed in Section 3 with other known non-weight irreducible modules, demonstrating that they indeed form a new class of irreducible $\mathfrak L$-modules.

\section{Preliminaries}\label{pre}
Fix any $\lambda\in\C^*,\a\in\C$ and $h(t)\in\C[t]$. For any $n\in\Z$, we define
\begin{align*}
h_{(n)}(t)=n h(t)-n(n-1)\a\frac{h(t)-h(\a)}{t-\a}.
\end{align*}

For $\lambda\in\C^*,\a,\b,\r\in\C$ and $h(t)\in\C[t]$, denote by $\Phi(\lambda,\a,\b,\r,h(t))=\C [s,t]$. In \cite{CG}, the $\mathfrak L$-module structure on $\Phi(\lambda,\a,\b,\r,h(t))$ is given by
\begin{equation*}
\aligned
&L_n(f(s,t))=\lambda^n(s+h_{(n)}(t))f(s-n,t)-n\lambda^n(t-n\a)\frac{\partial}{\partial t}\big(f(s-n,t)\big),\\
&W_n(f(s,t))=\lambda^n(t-n\a)f(s-n,t),\\
&I_n(f(s,t))=\la^n\big(\r-n\b \frac{h(t)-h(\a)}{t-\a}\big)f(s-n,t)+n\la^n\b \frac{\partial}{\partial t}\big(f(s-n,t)\big),\\
&J_n(f(s,t))=\la^n\b f(s-n,t), \quad {\rm where}\,\, f(s,t)\in\C[s,t].
\endaligned
\end{equation*}
For convenience, we denote $H(t)=\frac{h(t)-h(\a)}{t-\a}$.   Then, the action of $L_n$ and $I_n$ on $\Phi(\lambda,\a,\b,\r,h(t))$ can be written respectively as
\begin{align*}
L_n(f(t)s^i)&=\lambda^n(s-n)^i\big(sf(t)+n((tH(t)+h(\a))f(t)-tf^{\prime}(t))-n^2\a(H(t)f(t)-f^{\prime}(t))\big),\\
I_n(f(t)s^i)&=\lambda^n(s-n)^i\big(\r f(t)+n\b(f^{\prime}(t)-f(t)H(t))\big).
\end{align*}

For later use, we need the following known results of $\Phi(\lambda,\a,\b,\r,h(t))$.
\begin{prop}\label{pop1}(cf. \cite{CG})
Keep notations as above, then the following statements hold.
\begin{itemize}
\item[(1)] Any free $U(\C L_0\oplus\C W_0)$-module of rank one over  $\mathfrak L$  is isomorphic to $\Phi(\lambda,\a,\b,\r,h(t))$ for $\lambda\in\C^*,\a,\b,\r\in\C, h(t)\in\C[t]$.
\item[(2)] $\Phi(\lambda,\a,\b,\r,h(t))$  is irreducible  if and only if $\a\neq0$ or $\b\neq0$.
\item[(3)] $\Phi(\lambda,\a,\b,\r,h(t))\cong \Phi(\lambda^{\prime},\a^{\prime},\b^{\prime},\r^\prime,g(t))\Longleftrightarrow (\lambda,\a,\b,\r,h(t))=(\lambda^{\prime},\a^{\prime},\b^{\prime},\r^\prime,g(t)).$
\end{itemize}
\end{prop}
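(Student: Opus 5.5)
Proposition~\ref{pop1} is quoted from \cite{CG}, so here we only indicate how one would reprove it directly from the formulas above, in three steps.

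\textbf{Part (1).} Let $M=U(\C L_0\oplus\C W_0)\mathbf 1$ be free of rank one, and identify $M$ with $\C[s,t]$ via $s=L_0$, $t=W_0$. First use $[L_0,X_n]=nX_{n}$ for $X\in\{L,W,I,J\}$ (up to sign conventions) and $[W_0,L_n]=nW_n$, $[W_0,W_n]=0$, $[W_0,I_n]=0$, $[W_0,J_n]=0$. These give $X_n f(s,t)=f(s-n,t)\,X_n\mathbf 1$ for $X=W,I,J$, and a twisted version with a $t$-derivative term for $L_n$. Consequently everything is determined by the polynomials $X_n\mathbf 1\in\C[s,t]$.

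Next impose the brackets. From $[W_m,W_n]=0$ and $[L_m,W_n]=(n-m)W_{m+n}$ one forces $W_n\mathbf 1=\lambda^n(t-n\a)$, and then $L_n\mathbf 1=\lambda^n(s+h_{(n)}(t))$ as in the Heisenberg–Virasoro/$W(2,2)$ classifications. This is done by comparing degrees in $s$ and $t$, obtaining a multiplicative recursion in $n$ that yields $\lambda^n$ and the polynomial $h$. Then $[W_m,J_n]=0$ and $[L_m,J_n]=nJ_{m+n}$ give $J_n\mathbf 1=\lambda^n\b$. Finally $[W_m,I_n]=-nJ_{m+n}$ and $[L_m,I_n]=nI_{m+n}$ give the stated formula for $I_n\mathbf 1$, with the constant $\r$ and the $H(t)$ term. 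The main obstacle is the degree-bounding argument for $L_n\mathbf 1$ and $W_n\mathbf 1$, which must show that they are linear in $s$ with the given $t$-dependence.

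\textbf{Part (2).} Suppose first that $\a=\b=0$. Then $W_n$ acts as $\lambda^n t\,\sigma_n$ and $J_n=0$, where $\sigma_n$ denotes the shift $s\mapsto s-n$. In this case $t\,\C[s,t]$ is a proper submodule: $L_n$ involves only $t\partial_t$ and multiplication, and $I_n$ reduces to multiplication by $\r$, so $t\,\C[s,t]$ is stable. Conversely, assume $\a\ne0$ or $\b\ne0$, and let $N$ be a nonzero submodule.

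If $\b\ne0$, then $\lambda^{-n}J_n$ acts as $\b\sigma_n$. Hence $N$ is stable under all shifts $f(s,t)\mapsto f(s-n,t)$. Taking finite differences in $n$ then yields, from $f(s,t)\in N$, an element of $N$ that depends only on $t$ (times a nonzero constant). Next, $W_0$ multiplies by $t$, and $\lambda^{-n}W_n$ combined with $\b^{-1}\lambda^{-n}J_n$ gives multiplication by $t-n\a$. Using $L_0$ (multiplication by $s$), we obtain $\C[s,t]\cdot g(t)\subseteq N$. Finally the derivative term $n\b\partial_t$ in $I_n$ lowers the $t$-degree of $g$, so $1\in N$ and $N$ is the whole module.

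If $\b=0$ and $\a\ne0$, use $W_n$ instead. The operators $\lambda^{-n}W_n=(t-n\a)\sigma_n$ are expanded as polynomials in $n$ and used to extract multiplication by $t$ and the shift, and the derivative term $n\a\partial_t$ in $L_n$ then reduces the degree of $g$.

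\textbf{Part (3).} Compare the actions on the generator $\mathbf 1$. An isomorphism sends $\mathbf 1$ to some $g$ and commutes with $L_0$ and $W_0$, so it is multiplication by a polynomial $g$. Applying $W_n$ recovers $\lambda$ and $\a$; applying $J_n$ recovers $\b$; applying $I_n$ recovers $\r$; and applying $L_n$ recovers $h(t)$.
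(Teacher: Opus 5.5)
The paper does not prove this proposition; it is quoted from \cite{CG}, so your sketch has to stand on its own. Your arguments for (2) and (3) are sound, with two small repairs.

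In (3) you must use surjectivity. The map $\phi(f)=f\,\phi(\mathbf 1)$ is onto only if $\phi(\mathbf 1)\in\C^*$, and without this $L_n$ does not determine $h$. For example, when $\alpha=\beta=0$, the map $f\mapsto t^kf$ is a nonzero homomorphism $\Phi(\lambda,0,0,\gamma,h)\to\Phi(\lambda,0,0,\gamma,h+k)$.

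In the case $\beta=0\neq\alpha$ of (2), the degree-lowering element is the coefficient of $n^2$ (not $n$) in $\lambda^{-n}L_n g(t)$, namely $\alpha(g'-Hg)$.

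The genuine gap is in (1). You list $[W_0,I_n]=0$, but the defining relation $[W_m,I_n]=-nJ_{m+n}$ gives $[W_0,I_n]=-nJ_n$. This is nonzero in $\mathfrak L$ and acts nontrivially whenever $\beta\neq0$. So your reduction $I_nf(s,t)=f(s-n,t)\,I_n\mathbf 1$ is false. Since $[W_0,J_n]=0$, the correct identity is
\[
I_nf(s,t)=f(s-n,t)\,I_n\mathbf 1+n\,(\partial_tf)(s-n,t)\,J_n\mathbf 1 .
\]
The second term is exactly the source of the $n\lambda^n\beta\,\partial_t$ part of the $I_n$-action, which you then rely on in (2), so the proposal is internally inconsistent. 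Taken literally, your version gives $W_0I_n\mathbf 1-I_nW_0\mathbf 1=0$, hence $J_n\mathbf 1=0$ and $\beta=0$. The derivation therefore cannot produce the $\beta\neq0$ family at all.

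With the corrected identity, the step you only assert does go through. Applying $[W_m,I_n]=-nJ_{m+n}$ to $\mathbf 1$, the two $J$-terms cancel, which forces $i_n:=I_n\mathbf 1$ to be independent of $s$. Applying $[L_m,I_n]=nI_{m+n}$ to $\mathbf 1$ then gives
\[
\lambda^m\bigl(n\,i_n-m(t-m\alpha)\,i_n'-n\lambda^n\beta\,h_{(m)}'(t)\bigr)=n\,i_{m+n}.
\]
The last term on the left again comes only from the derivative part of $I_n$ acting on $L_m\mathbf 1$. Using $h_{(m)}'=mH+m(t-m\alpha)H'$, one checks that $i_n=\lambda^n(\gamma-n\beta H(t))$ is a solution. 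The homogeneous equation is the same one governing $J_n\mathbf 1$, whose solutions are $\lambda^n c$ with $c\in\C$, so these are all the solutions. Without the $[W_0,I_n]$ term, neither the $\partial_t$ in $I_n$ nor the coupling $-n\beta H(t)$ of $I_n\mathbf 1$ to $h$ can arise.
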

The Lie algebra $\mathfrak L$ is a $\Z$-graded algebra with a decomposition
$$\mathfrak L=\oplus_{n\in\Z}\mathfrak L_{n},\quad [\mathfrak L_{n}, \mathfrak L_{m}]\subset\mathfrak L_{n+m},\,\,m,n\in\Z,$$
where  $\mathfrak L_{n}=\C L_n\oplus \C W_n\oplus \C I_n\oplus \C J_n$.
\begin{defi}\label{defi2.2}
An $\mathfrak L$-module $V$ is called a restricted module if for any $v \in V$, there exists $m \in \mathbb{Z}_{+}$ such that $\mathfrak L_n \cdot v = 0$ for all $n \geq m $. 
\end{defi}
    
In the rest of this paper, we shall always assume that $\r \in\C$, $(\lambda, \a)\in(\C^*)^2$ or $(\lambda, \b)\in(\C^*)^2$, $h(t) \in \mathbb{C}[t]$ and $V$ is an irreducible restricted $\mathfrak L$-module.
    
\section{Irreducibility of tensor product modules}
%	In this section, we investigate the structures of the tensor products of the irreducible  $\mathfrak L$-modules defined in the previous section, that is, the modules $\Omega(\lambda, \alpha, h)$ and simple modules as in Definition \ref{defi2.2}. In particular, we determine their irreducibility.
Let $s_1, s_2, \ldots, s_m, t_1, t_2, \ldots, t_m$ be commuting variables, where $m\in\N$. Fix any complex numbers $\r_i \in\C$, $(\lambda_i, \a_i)\in(\C^*)^2$ or  $(\lambda_i, \b_i)\in(\C^*)^2$, and polynomials $h_i(t_i) \in \C[t_i]$, $1\leq i \leq m$, we have the corresponding $\mathfrak L$-modules $\Phi(\lambda_i, \alpha_i, \b_i, \r_i, h_i(t_i))$, $1\leq i \leq m$ and $\Phi(\lambda_i, \alpha_i, \b_i, \r_i, h_i(t_i))=\C[s_i, t_i]$ for $1\leq i \leq m$ as vector spaces. 
Taking an arbitrary  irreducible  restricted $\mathfrak L$-module $V$, we can form the tensor product
\begin{equation}\label{a11} \bm{T}(\bm{\lambda}, \bm{\alpha}, \bm{\b}, \bm{\r}, \bm{h(t)}, V)=(\otimes_{i=1}^{m}\Phi(\lambda_i, \alpha_i, \b_i, \r_i, h_i(t_i))\otimes V,\end{equation}
where $\bm{\lambda}=(\lambda_1, \ldots, \lambda_m), \bm{\alpha}=(\alpha_1, \ldots, \alpha_m),\bm{\b}=(\b_1, \ldots, \b_m),\bm{\r}=(\r_1, \ldots, \r_m)$ and $\bm{h(t)}=(h_1(t_1), \ldots, h_m(t_m))$. Denote simply $\bm{T}:=\bm{T}(\bm{\lambda}, \bm{\alpha}, \bm{\b}, \bm{\r}, \bm{h(t)}, V)$. Take any nonzero element
\begin{equation*}f(s_1, s_2, \ldots, s_m, t_1, t_2, \ldots, t_m)=\sum_{(\mathbf{p}, \mathbf{q})\in E}s_1^{p_1}t_1^{q_1}\otimes\cdots\otimes s_m^{p_m}t_m^{q_m}\otimes v_{(\mathbf{p}, \mathbf{q})}\in\bm{T},\end{equation*}
where $(\mathbf{p}, \mathbf{q})=(p_1, p_2, \ldots, p_m, q_1, q_2, \ldots, q_m), v_{(\mathbf{p}, \mathbf{q})}\in V\setminus\{0\}$ and $E$ is a finite subset of $(\mathbb{Z}_{+})^{2m}$.  For convenience, we may write $\mathbf{s}^{\mathbf{p}}\mathbf{t}^{\mathbf{q}}=s_1^{p_1}t_1^{q_1}\otimes\cdots \otimes s_m^{p_m}t_m^{q_m}$
and the element $f(\mathbf{s}, \mathbf{t})\in \bm{T}$ can be rewritten as
\begin{equation}\label{a12}f(\mathbf{s}, \mathbf{t})=\sum_{(\mathbf{p}, \mathbf{q})\in E}\mathbf{s}^{\mathbf{p}}\mathbf{t}^{\mathbf{q}}\otimes v_{(\mathbf{p}, \mathbf{q})}.\end{equation}
Denote $P_i= {\rm max\,}\{p_i\mid (\mathbf{p}, \mathbf{q})\in E\}$ for all $1\leq i \leq m$. We simply write $(\mathbf{s}^{\mathbf{p}}\mathbf{t}^{\mathbf{q}})(\mathbf{s}^{\mathbf{p\prime}}\mathbf{t}^{\mathbf{q\prime}})=\mathbf{s}^{\mathbf{p}+\mathbf{p\prime}}\mathbf{t}^{\mathbf{q}+\mathbf{q\prime}}$ for short in what follows.

For any positive integer $l$,  we can define a total order $``\succ"$ on $\Z^l$ by
$$(a_1, \ldots, a_l) \succ (b_1, \ldots, b_l) \Longleftrightarrow  \mathrm{ there\ exists} \ k\in\N \ \mathrm{such \ that}  \ a_k>b_k \ \mathrm{and}\ a_{i}=b_{i},\ \forall\, 1\leq i<k.$$
Then we define the degree ${\rm deg\,}(f(\mathbf{s}, \mathbf{t}))$ of $f(\mathbf{s}, \mathbf{t})$ as the maximal $(\mathbf{p}, \mathbf{q})\in E$ with $v_{(\mathbf{p}, \mathbf{q})}\neq0$. Note that ${\rm deg\,}(1\otimes \cdots \otimes 1\otimes v)=\mathbf{0}=(0, 0,\ldots, 0)$ for $v\in V\setminus \{0\}$.	
	
We need the following two crucial results which will be used repeatedly throughout the paper.
\begin{lemm}\label{prop1}(cf. \cite{TZ2})
Let $\lambda_1, \lambda_2, \ldots, \lambda_m\in\C^*$,  $k_1, k_2, \ldots, k_m\in\N$ with $k_1+k_2+\cdots+k_m=k$. Define a sequence of functions on $\Z$ as follows: $f_1(n)=\lambda_1^n, f_2(n)=n\lambda_1^n, \ldots, f_{k_1}(n)=n^{k_1-1}\lambda_1^n, f_{k_1+1}(n)=\lambda_2^n, \ldots, f_{k_1+k_2}(n)=n^{k_2-1}\lambda_2^n, \ldots, f_{k}(n)=n^{k_m-1}\lambda_m^n$. Let $\mathfrak{M}=(y_{pq})$ be the $k\times k$ matrix with $y_{pq}=f_q(p-1)$, $q=1, 2, \ldots, k, p=r+1, r+2, \ldots, r+k$ where $r\in\Z_+$. Then
\begin{equation*}det(\mathfrak{M})=\prod_{j=1}^m(k_j-1)!!\lambda_j^{k_j(k_j+2r-1)/2}\prod_{1\leq i<j\leq m}(\lambda_j-\lambda_i)^{k_ik_j},\end{equation*}
where $k_j!!=k_j!\times (k_j-1)!\times\cdots \times 2!\times 1!$ with $0!!=1$.
	\end{lemm}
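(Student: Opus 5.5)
The plan is to reduce the determinant to a confluent Vandermonde determinant in two steps, using only unipotent (determinant-preserving) column operations inside each block of columns belonging to a fixed $\lambda_j$.

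\emph{Step 1: remove the shift $r$.} Write the row index as $n=p-1=r+u$ with $u=0,1,\ldots,k-1$. In the block of $\lambda_j$, the column $f(n)=n^{l}\lambda_j^{n}$ with $0\le l\le k_j-1$ becomes
\[
(r+u)^{l}\lambda_j^{r+u}=\lambda_j^{r}\sum_{a=0}^{l}\binom{l}{a}r^{l-a}u^{a}\lambda_j^{u}.
\]
So the block's columns are $\lambda_j^{r}$ times a unitriangular combination of the columns $u^{a}\lambda_j^{u}$ for $a\le l$. Subtracting earlier columns of the same block from later ones leaves the determinant unchanged. Pulling out $\lambda_j^{r}$ from each of the $k_j$ columns then gives a factor $\prod_j\lambda_j^{rk_j}$, and reduces the problem to the case $r=0$.

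\emph{Step 2: pass to derivatives.} For $r=0$, replace the monomials $u^{l}$ by the falling factorials $u(u-1)\cdots(u-l+1)$. This is again a unitriangular change of basis within each block, so the determinant is unchanged. Now use
\[
u(u-1)\cdots(u-l+1)\lambda^{u}=\lambda^{l}\,\frac{d^{l}}{d\lambda^{l}}\lambda^{u}.
\]
Pulling $\lambda_j^{l}$ out of the $l$-th column of block $j$ gives $\lambda_j^{0+1+\cdots+(k_j-1)}=\lambda_j^{k_j(k_j-1)/2}$. Writing $\partial^{l}=l!\cdot\frac{1}{l!}\partial^{l}$ gives a further factor $\prod_{l<k_j}l!=(k_j-1)!!$. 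Combining the exponents, $rk_j+k_j(k_j-1)/2=k_j(k_j+2r-1)/2$, which matches the claimed formula. It therefore remains to show the confluent Vandermonde identity
\[
\det\Big(\tfrac{1}{l!}\partial^{l}\lambda^{u}\big|_{\lambda=\lambda_j}\Big)=\prod_{1\le i<j\le m}(\lambda_j-\lambda_i)^{k_ik_j},
\]
with rows indexed by $u=0,\ldots,k-1$ and columns by the pairs $(j,l)$.

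\emph{Step 3: the confluent Vandermonde identity.} I expect this to be the main obstacle. I would derive it by a limiting argument from the ordinary Vandermonde determinant $\det(x_q^{u})=\prod_{q<q'}(x_{q'}-x_q)$. Take distinct points $x_{j,0},\ldots,x_{j,k_j-1}$ clustering at $\lambda_j$, ordered block by block. Replace the columns of each cluster by successive divided differences, which is a triangular operation whose effect on the determinant is division by the within-cluster Vandermonde factors $\prod_{l<l'}(x_{j,l'}-x_{j,l})$. As the points coalesce, the divided differences of $x\mapsto x^{u}$ converge to $\frac{1}{l!}\partial^{l}x^{u}|_{\lambda_j}$. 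On the other side of the identity, after the within-cluster factors cancel, the Vandermonde product tends to $\prod_{i<j}(\lambda_j-\lambda_i)^{k_ik_j}$. Continuity of the determinant then gives the identity.

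The ordering of the blocks ($\lambda_1$ columns first, and so on) matches the sign convention $(\lambda_j-\lambda_i)$ with $i<j$. Multiplying the three factors from Steps 1–3 yields the stated formula.
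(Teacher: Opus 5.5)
Your proof is correct, and it necessarily takes a different route from the paper, because the paper gives no argument for this lemma. The lemma is simply imported from \cite{TZ2}. It is then used only through the consequence that $\det\mathfrak{M}\neq 0$ when the $\lambda_j$ are pairwise distinct; that is what lets one isolate the coefficient of each $n^l\lambda_j^n$ in Proposition~\ref{prop2}, Lemma~\ref{theoo5} and Theorem~\ref{theo4}.

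Your three reductions are: the unitriangular change $(r+u)^l\mapsto u^l$; then $u^l\mapsto u^{\underline{l}}=u(u-1)\cdots(u-l+1)$, using $u^{\underline{l}}\lambda^u=\lambda^l\partial^l\lambda^u$; then the normalized confluent Vandermonde determinant obtained by coalescing nodes. This makes the paper self-contained and accounts for each factor separately: $\lambda_j^{rk_j}$ comes from the shift, $\lambda_j^{k_j(k_j-1)/2}(k_j-1)!!$ from the derivatives, and $\prod_{i<j}(\lambda_j-\lambda_i)^{k_ik_j}$ from the cross-cluster Vandermonde factors. The orientation is also right, since the divided-difference step removes exactly the within-cluster factors $\prod_{i<l}(x_{j,l}-x_{j,i})$ of $\prod_{q<q'}(x_{q'}-x_q)$.

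The one limit you leave implicit is immediate here. The divided difference $[x_0,\ldots,x_l]\,x^u$ is the complete homogeneous symmetric polynomial $h_{u-l}(x_0,\ldots,x_l)$ (zero if $u<l$). This is a polynomial in the nodes, and its value at $x_0=\cdots=x_l=\lambda$ is $\binom{u}{l}\lambda^{u-l}=\frac{1}{l!}\partial^l\lambda^u$. Only within-cluster distinctness of the nodes is needed, so your argument also covers coinciding $\lambda_j$, where both sides are $0$.

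What the citation buys is brevity. If one wanted only what the paper actually uses, nonsingularity for distinct $\lambda_j$ has an even shorter proof. The $k$ sequences $n^l\lambda_j^n$ form a basis of solutions of the linear recurrence with characteristic polynomial $\prod_j(x-\lambda_j)^{k_j}$, whose constant term is nonzero. Hence a solution vanishing at $k$ consecutive integers vanishes identically.
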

	\begin{prop}\label{prop2}
Assume that $\lambda_1, \ldots, \lambda_m$ are distinct.  Let $M$ be a subspace of $\bm{T}$ that is stable under the actions of $L_n, W_n, I_n$ and $J_n$ for any $n$ sufficiently large. Then for any nonzero $f(\mathbf{s}, \mathbf{t})=\sum_{(\mathbf{p}, \mathbf{q})\in E}\mathbf{s}^{\mathbf{p}}\mathbf{t}^{\mathbf{q}}\otimes v_{(\mathbf{p}, \mathbf{q})}\in M$,  we have
\begin{align}\label{111}&\sum_{(\mathbf{p}, \mathbf{q})\in E}\mathbf{s}^{\mathbf{p}+\omega_i}\mathbf{t}^{\mathbf{q}}\otimes v_{(\mathbf{p}, \mathbf{q})}\in M,\\
\label{222}&\sum_{(\mathbf{p}, \mathbf{q})\in E}\mathbf{s}^{\mathbf{p}}\mathbf{t}^{\mathbf{q}+\omega_i}\otimes v_{(\mathbf{p}, \mathbf{q})}\in M,\\
\label{333}&\sum_{\substack{(\mathbf{p}, \mathbf{q})\in E \\ p_i=P_i}}\mathbf{s}^{\mathbf{p}-P_i\omega_i}\big(H_i(t_i)\mathbf{t}^{\mathbf{q}}-q_i\mathbf{t}^{\mathbf{q}-\omega_i}\big)\otimes v_{(\mathbf{p}, \mathbf{q})}\in M,\\
\label{444}&\sum_{\substack{(\mathbf{p}, \mathbf{q})\in E \\ p_i=P_i}}\mathbf{s}^{\mathbf{p}-P_i\omega_i}\mathbf{t}^{\mathbf{q}}\otimes v_{(\mathbf{p}, \mathbf{q})}\in M,
\end{align}
where $\omega_i=(\delta_{i, 1}, \delta_{i, 2},\ldots, \delta_{i, m})$ and $i=1, \ldots, m$.
\end{prop}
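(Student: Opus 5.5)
The plan is to expand $X_n f(\mathbf{s},\mathbf{t})$, for $X\in\{L,W,I,J\}$ and $n$ large, as a finite linear combination $\sum_{i=1}^m\sum_{j\ge 0} n^j\lambda_i^n\,u_{i,j}$, where the vectors $u_{i,j}\in\bm T$ do not depend on $n$. Lemma \ref{prop1} then lets us extract the individual coefficients $u_{i,j}$ into $M$.

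Choose $N$ so that $\mathfrak L_n v_{(\mathbf p,\mathbf q)}=0$ for all $n\ge N$ and all $(\mathbf p,\mathbf q)\in E$, and also so that $M$ is stable under $\mathfrak L_n$ for $n\ge N$. For such $n$ the action of $X_n$ on $\bm T$ only touches the tensor factors $\Phi(\lambda_i,\a_i,\b_i,\r_i,h_i(t_i))$. Hence $X_nf=\sum_{i=1}^m \lambda_i^n(\cdots)$, where the $i$-th summand is $X_n$ acting on the $i$-th factor. Each term $(s_i-n)^{p_i}$ expands as $\sum_j \binom{p_i}{j}(-n)^j s_i^{p_i-j}$. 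Therefore $X_nf=\sum_{i}\sum_{j=0}^{P_i+2}n^j\lambda_i^n u^X_{i,j}$, with $u^X_{i,j}$ independent of $n$.

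Take $k_i=P_i+3$ and consider $n=r+1,\dots,r+k$ with $r\ge N$. Since the $\lambda_i$ are distinct, the matrix of Lemma \ref{prop1} is invertible. So each $u^X_{i,j}$ is a linear combination of the vectors $X_nf\in M$, and hence lies in $M$. It remains to read off the specific coefficients.

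For $J$ (when $\b_i\ne0$) or $W$: with $J_n$ the coefficient of $n^0\lambda_i^n$ is $\b_i f$, and the top power $n^{P_i}$ has coefficient $\b_i(-1)^{P_i}\sum_{p_i=P_i}\mathbf s^{\mathbf p-P_i\omega_i}\mathbf t^{\mathbf q}\otimes v$. This gives \eqref{444}. Alternatively use $W_n$: the factor $(t_i-n\a_i)(s_i-n)^{p_i}$ has $n^0$-coefficient $t_if$, giving \eqref{222}, and its $n^{P_i+1}$-coefficient is $(-1)^{P_i+1}\a_i\sum_{p_i=P_i}\mathbf s^{\mathbf p-P_i\omega_i}\mathbf t^{\mathbf q}\otimes v$. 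This gives \eqref{444} when $\a_i\neq0$. When $\a_i=0$ we have $\b_i\ne0$, and \eqref{444} comes from $J$ instead. For \eqref{222} when $\a_i=0$, the $W_n$ coefficient of $n^0$ is already $t_if$.

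For \eqref{111}, take the $n^0$-coefficient of $L_n$, which is $s_if$.

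For \eqref{333}, look at the highest power of $n$. For $L_n$ with $\a_i\ne0$, the $n^{P_i+2}$-coefficient is $(-1)^{P_i+1}\a_i\sum_{p_i=P_i}\mathbf s^{\mathbf p-P_i\omega_i}(H_i\mathbf t^{\mathbf q}-q_i\mathbf t^{\mathbf q-\omega_i})\otimes v$, exactly as needed. If $\a_i=0$, use $I_n$ instead: its $n^{P_i+1}$-coefficient is $(-1)^{P_i}\b_i\sum_{p_i=P_i}\mathbf s^{\mathbf p-P_i\omega_i}(q_i\mathbf t^{\mathbf q-\omega_i}-H_i\mathbf t^{\mathbf q})\otimes v$, and we divide by $\b_i\ne0$.

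The main obstacle is bookkeeping rather than any deep step. First, we must make sure that the bound on the $n$-degree is uniform, so that a single Vandermonde-type system from Lemma \ref{prop1} applies. Second, we must confirm that in each top coefficient only the terms with $p_i=P_i$ contribute. This holds because lower $p_i$ give strictly smaller powers of $n$.
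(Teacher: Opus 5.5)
Your proposal is correct and follows essentially the same route as the paper. You expand $X_nf$ for large $n$ in the functions $n^j\lambda_i^n$ and invert via Lemma \ref{prop1}. You then read off the $n^0$-coefficients of $L_nf$ and $W_nf$ for \eqref{111}--\eqref{222}, and the top coefficients for \eqref{333}--\eqref{444}: $n^{P_i+2}$ in $L_nf$ and $n^{P_i+1}$ in $W_nf$ when $\a_i\neq0$, and $n^{P_i+1}$ in $I_nf$ and $n^{P_i}$ in $J_nf$ when $\b_i\neq0$. The only differences are harmless: you write out the $\a_i\neq0$ case that the paper merely cites from earlier work, and you make the $\a_i/\b_i$ case split index by index.
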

\begin{proof}
The case $\a\in\C^*$ was proved in \cite[Lemma 3.2]{CX} by considering $L_n(f(\mathbf{s}, \mathbf{t}))$ and $W_n(f(\mathbf{s}, \mathbf{t}))$ for sufficiently large integer $n$.
Note that the proofs \eqref{111} and \eqref{222} are independent of $\a$.  Therefore, it remains to prove \eqref{333} and \eqref{444} for the case $\b\in\C^*$. For sufficiently large integer $n$, we have
\begin{align}\label{bbn}
I_n(f(\mathbf{s}, \mathbf{t}))&=\sum_{(\mathbf{p}, \mathbf{q})\in E}\sum_{i=1}^ms_1^{p_1}t_1^{q_1}\otimes\cdots \otimes I_n(s_i^{p_{i}}t_i^{q_i})\otimes\cdots\otimes s_m^{p_m}t_m^{q_m}\otimes v_{(\mathbf{p}, \mathbf{q})}\nonumber\\
&=\sum_{(\mathbf{p}, \mathbf{q})\in E}\sum_{i=1}^{m} s_1^{p_1} t_1^{q_1} \otimes \cdots \otimes \lambda_i^n(s_i-n)^{p_i}\big(\r_i t_i^{q_i}+n\b_i(q_it_i^{q_i-1}-H_i(t_i) t_i^{q_i})\big) \nonumber\\
&\quad\otimes \cdots \otimes s_m^{p_m} t_m^{q_m} \otimes v_{(\mathbf{p}, \mathbf{q})} \nonumber\\
&=\sum_{(\mathbf{p}, \mathbf{q}) \in E} \sum_{i=1}^{m} \sum_{j=0}^{p_i+1} (-1)^j n^j\lambda_i^n  s_1^{p_1} t_1^{q_1} \otimes \cdots \otimes \big(\r_i\binom{p_i}{j} s_i^{p_i-j} t_i^{q_i} \nonumber\\
&\quad-\b_i\binom{p_i}{j-1}s_i^{p_i-j+1}(q_it_i^{q_i-1}-H_i(t_i) t_i^{q_i})\big)\otimes \cdots\otimes s_m^{p_m} t_m^{q_m} \otimes v_{(\mathbf{p}, \mathbf{q})} 
\end{align}
and
\begin{align}\label{bbn2}
J_n(f(\mathbf{s}, \mathbf{t}))&=\sum_{(\mathbf{p}, \mathbf{q})\in E}\sum_{i=1}^ms_1^{p_1}t_1^{q_1}\otimes\cdots \otimes J_n(s_i^{p_{i}}t_i^{q_i})\otimes\cdots\otimes s_m^{p_m}t_m^{q_m}\otimes v_{(\mathbf{p}, \mathbf{q})}\nonumber\\
&=\sum_{(\mathbf{p}, \mathbf{q})\in E}\sum_{i=1}^ms_1^{p_1}t_1^{q_1}\otimes\cdots \otimes \lambda_i^n\b_i(s_i-n)^{p_i} t_i^{q_i}\otimes\cdots\otimes s_m^{p_m}t_m^{q_m}\otimes v_{(\mathbf{p}, \mathbf{q})}\nonumber\\
&=\sum_{(\mathbf{p}, \mathbf{q})\in E}\sum_{i=1}^{m}\sum_{k=0}^{p_i}(-1)^k n^k\lambda_i^n\b_i s_1^{p_1} t_1^{q_1}\otimes\cdots\otimes\binom{p_i}{k} s_i^{p_{i}-k} t_i^{q_i}\otimes\cdots\otimes s_m^{p_m}t_m^{q_m}\otimes v_{(\mathbf{p}, \mathbf{q})},
\end{align}
where we make the convention that $\binom{0}{0}=1$ and $\binom{x}{y}=0$ whenever $y>x$ or $y<0$.  Thanks to Lemma \ref{prop1}, we know that the coefficients of $n^{P_i+1}\lambda_i^n $ and $n^{P_i}\lambda_i^n$ in \eqref{bbn} and \eqref{bbn2}  lie in  $M$, giving  \eqref{333} and \eqref{444}, respectively. This completes the proof.
%\begin{align*}\sum_{(\mathbf{p}, \mathbf{q})\in E}s_1^{p_1} t_1^{q_1} \otimes \cdots \otimes \big(\r_i\binom{p_i}{j} s_i^{p_i-j} t_i^{q_i}
%-\b_i\binom{p_i}{j-1}s_i^{p_i-j+1}t_i^{q_i-1}(q_i-h_i(t_i)+h_i(0))\big)\\
%&\quad\otimes \cdots \otimes s_m^{p_m} t_m^{q_m} \otimes v_{(\mathbf{p}, \mathbf{q})}\in M,\\
%b_{i,k}&:=\sum_{(\mathbf{p}, \mathbf{q})\in E}s_1^{p_1} t_1^{q_1}\otimes\cdots\otimes\binom{p_i}{k} s_i^{p_{i}-k} t_i^{q_i}\otimes\cdots\otimes s_m^{p_m}t_m^{q_m}\otimes v_{(\mathbf{p}, \mathbf{q})}\in M.
%\end{align*}
%For any $1\leq i \leq m$, taking $j=0,P_i+2$ and $k=0,P_i+1$ in the above two elements, respectively, we get 
\end{proof}
\begin{prop}\label{prop3}
Let $\lambda_1, \ldots, \lambda_m$ be pairwise distinct. Then $1\otimes\cdots \otimes 1\otimes v$ generates the  $\mathfrak L$-module $\bm{T}$ for  any $0\neq v\in V$.
\end{prop}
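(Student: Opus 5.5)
Let $M$ be the $\mathfrak L$-submodule of $\bm{T}$ generated by $1\otimes\cdots\otimes 1\otimes v$ for a fixed $0\neq v\in V$. Since the $\lambda_i$ are pairwise distinct and $M$ is stable under all of $\mathfrak L$, Proposition \ref{prop2} applies to $M$. The argument has two steps: first show that $M$ contains every $1\otimes\cdots\otimes 1\otimes w$ with $w\in V$, then fill in all the polynomial factors.

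\textbf{Step 1.} Set $M_0=\{w\in V\mid 1\otimes\cdots\otimes 1\otimes w\in M\}$; it contains $v$. We show that $M_0$ is an $\mathfrak L$-submodule of $V$, so that $M_0=V$ by irreducibility. Take $w\in M_0$ and $x\in\{L_k,W_k,I_k,J_k\}$. By the coproduct,
$$x(1\otimes\cdots\otimes1\otimes w)=\sum_{i=1}^m 1\otimes\cdots\otimes x(1)\otimes\cdots\otimes 1\otimes w+1\otimes\cdots\otimes1\otimes xw.$$
Each term $x(1)\in\Phi(\lambda_i,\a_i,\b_i,\r_i,h_i(t_i))$ is a polynomial in $s_i,t_i$ with explicit coefficients, for example $L_k(1)=\lambda_i^k(s_i+h_{i,(k)}(t_i))$ and $W_k(1)=\lambda_i^k(t_i-k\a_i)$.

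So it suffices to show that $M$ contains $g(\mathbf s,\mathbf t)\otimes w$ for every polynomial $g$, whenever $1\otimes\cdots\otimes1\otimes w\in M$. This follows by repeatedly applying \eqref{111} and \eqref{222} to the element $1\otimes\cdots\otimes1\otimes w$, whose index set is $E=\{\mathbf 0\}$. These operations multiply by $s_i$ and $t_i$ while keeping the $V$-component fixed. Iterating them and taking linear combinations gives all $\mathbf s^{\mathbf p}\mathbf t^{\mathbf q}\otimes w\in M$. Subtracting the terms $x(1)$ from the displayed expression then yields $1\otimes\cdots\otimes1\otimes xw\in M$, so $xw\in M_0$ and $M_0$ is a submodule.

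\textbf{Step 2.} With $M_0=V$, Step 1 gives $\mathbf s^{\mathbf p}\mathbf t^{\mathbf q}\otimes w\in M$ for all $\mathbf p,\mathbf q$ and all $w\in V$. These elements span $\bm{T}$, so $M=\bm{T}$.

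The main point needing care is that \eqref{111} and \eqref{222} require only stability under $L_n,W_n$ for $n$ large, which holds trivially for a submodule. These two statements are independent of whether $\a_i\neq0$ or $\b_i\neq0$, so the standing hypotheses cover them. I expect no further obstacle: the Vandermonde-type Lemma \ref{prop1} is already absorbed into Proposition \ref{prop2}.
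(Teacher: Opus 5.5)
Your proof is correct and follows the paper's argument. Both use \eqref{222} and \eqref{111} to pass from $1\otimes\cdots\otimes1\otimes w\in M$ to $\C[\mathbf{s},\mathbf{t}]\otimes w\subseteq M$, and then apply the irreducibility of $V$ to an auxiliary subspace. The differences are cosmetic: the paper uses $V_0=\{u\in V\mid \C[\mathbf{s},\mathbf{t}]\otimes u\subseteq M\}$, which equals your $M_0$ by that same step, and it only asserts that $V_0$ is a submodule, whereas you carry out the coproduct computation that verifies it.
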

\begin{proof}
%Since the proof is analogous to that of \cite[Lemma 3.3]{CX}, we omit the details.
Fix any nonzero $v\in V$ and let $M$ denote the submodule of $\bm{T}$ generated by $1\otimes\cdots \otimes 1\otimes v$. Using \eqref{222} and and inducting on  $\mathbf{q}$, we have $\mathbf{t}^{\mathbf{q}}\otimes v\in M$ for all $\mathbf{q}\in(\Z_+)^m$. We further deduce from  \eqref{111} that $\mathbf{s}^{\mathbf{p}}\mathbf{t}^{\mathbf{q}}\otimes v\in M$ for all $(\mathbf{p}, \mathbf{q})\in(\Z_+)^{2m}$ by induction on $\mathbf{p}$. That is $\C[\mathbf{s}, \mathbf{t}]\otimes v\subseteq M$. Let $V_0:=\{u\in V\mid \C[\mathbf{s}, \mathbf{t}]\otimes u\subseteq M\}$. The previous argument implies that $V_0\neq\{0\}$.  Moreover, $V_0$ is an $\mathfrak L$-submodule of  $V$. Since  $V$ is irreducible, it follows that $M=\bm{T}$, as desired.
\end{proof}
We are now in a position to determine a sufficient condition for the tensor product module $\bm{T}$ to be irreducible.	
\begin{theo}\label{theo1}
Let $\lambda_i\in\C^*$ for $i=1, 2, \ldots, m$,  with the $\lambda_i$ pairwise distinct. Then the tensor product module $\bm{T}$ is irreducible.
\end{theo}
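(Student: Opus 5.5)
Let $M$ be a nonzero submodule of $\bm{T}$. By Proposition \ref{prop3} it suffices to show that $M$ contains some element $1\otimes\cdots\otimes1\otimes v$ with $v\neq0$. Choose a nonzero $f(\mathbf{s},\mathbf{t})=\sum_{(\mathbf{p},\mathbf{q})\in E}\mathbf{s}^{\mathbf{p}}\mathbf{t}^{\mathbf{q}}\otimes v_{(\mathbf{p},\mathbf{q})}\in M$ of minimal degree with respect to $\succ$. The aim is to prove $\deg f=\mathbf{0}$. Then $f=1\otimes\cdots\otimes1\otimes v$ and we are done. Since $M$ is a submodule, it is stable under all $L_n,W_n,I_n,J_n$, so Proposition \ref{prop2} applies to $M$.

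First I reduce the $\mathbf{s}$-degree. Suppose some $P_i>0$, and take the first such index $i$ in the order. If $\b_i\neq0$, I use \eqref{444}. The element $\sum_{p_i=P_i}\mathbf{s}^{\mathbf{p}-P_i\omega_i}\mathbf{t}^{\mathbf{q}}\otimes v_{(\mathbf{p},\mathbf{q})}$ is nonzero, because the distinct monomials remain distinct after $p_i$ is set to $0$. Its degree is strictly smaller than $\deg f$, since the leading term has its $i$-th $\mathbf{s}$-coordinate lowered from $P_i>0$ to $0$ while the earlier coordinates stay the same. This contradicts minimality.

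If instead $\a_i\neq0$, I need the corresponding version of \eqref{444} from \cite[Lemma 3.2]{CX}, which the paper cites for the case $\a\in\C^*$. I expect it to come from the top coefficient $n^{P_i+1}\lambda_i^n$ in $W_n f$: here $W_n$ contributes $(t_i-n\a_i)(s_i-n)^{p_i}$, so the top coefficient is $\a_i$ times the same element. Hence $P_i=0$ for all $i$, and every term of $f$ has $\mathbf{p}=\mathbf{0}$.

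Next I reduce the $\mathbf{t}$-degree. Now $f=\sum_{\mathbf{q}}\mathbf{t}^{\mathbf{q}}\otimes v_{\mathbf{q}}$, and I again take the first index $i$ with $Q_i:=\max q_i>0$. If $\b_i\neq0$, \eqref{333} with $P_i=0$ gives $\sum_{\mathbf{q}}(H_i(t_i)\mathbf{t}^{\mathbf{q}}-q_i\mathbf{t}^{\mathbf{q}-\omega_i})\otimes v_{\mathbf{q}}\in M$. Repeated use of \eqref{222} also puts $\sum_{\mathbf{q}} g(t_i)\mathbf{t}^{\mathbf{q}}\otimes v_{\mathbf{q}}$ in $M$ for any polynomial $g$. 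Subtracting the $H_i(t_i)$-multiple of $f$ leaves $-\sum_{\mathbf{q}}q_i\mathbf{t}^{\mathbf{q}-\omega_i}\otimes v_{\mathbf{q}}\in M$. This element is nonzero: distinct $\mathbf{q}$ with $q_i>0$ give distinct monomials, and the terms with $q_i=Q_i$ survive with coefficient $Q_i\neq0$. Its degree is smaller than $\deg f$, again contradicting minimality. In the case $\a_i\neq0$, the analogous top coefficient of $L_n f$ is $-\a_i\sum(H_i\mathbf{t}^{\mathbf{q}}-q_i\mathbf{t}^{\mathbf{q}-\omega_i})\otimes v_{\mathbf{q}}$, coming from the $n^2\a$ term, and the same subtraction works.

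Therefore $\deg f=\mathbf{0}$, $f=1\otimes\cdots\otimes1\otimes v$, and $M=\bm{T}$ by Proposition \ref{prop3}. The main obstacle is the case $\b_i=0$: there Proposition \ref{prop2} is only stated by reference to \cite{CX}. I would verify the required coefficient extraction for $L_n$ and $W_n$ via Lemma \ref{prop1}, which requires the $\lambda_i$ to be distinct. I would also check carefully that each degree comparison is strict under $\succ$, given the ordering $(\mathbf{p},\mathbf{q})$.
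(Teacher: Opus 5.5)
Your proposal is correct and follows essentially the same route as the paper: take a minimal-degree element, use \eqref{444} to remove the $\mathbf{s}$-part, then combine \eqref{333} with \eqref{222} (subtracting the $H_i(t_i)$-multiple) to lower the $\mathbf{t}$-degree, and finish with Proposition \ref{prop3}. Your check of the $\a_i\neq0$ case is accurate: the relevant elements are the top coefficients $n^{P_i+1}\lambda_i^n$ of $W_n f$ and $n^{P_i+2}\lambda_i^n$ of $L_n f$, extracted via Lemma \ref{prop1}. This case is already covered by Proposition \ref{prop2} as stated, so the check is not strictly needed.
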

\begin{proof}
Let $M$ be a nonzero submodule of $\bm{T}$, and choose a nonzero element $f(\mathbf{s}, \mathbf{t})=\sum_{(\mathbf{p}, \mathbf{q})\in E}\mathbf{s}^{\mathbf{p}}\mathbf{t}^{\mathbf{q}}\otimes v_{(\mathbf{p}, \mathbf{q})}\in M$ of minimal degree. We claim that ${\rm deg\,}(f(\mathbf{s}, \mathbf{t}))=\mathbf{0}$, consequently, $f(\mathbf{s}, \mathbf{t})=1\otimes\cdots \otimes 1\otimes v$ for some nonzero $v\in V$. Therefore, by Proposition \ref{prop3} we have $M=\bm{T}$, and hence $\bm{T}$ is irreducible.

Assume conversely that ${\rm deg\,}(f(\mathbf{s}, \mathbf{t}))\succ \mathbf{0}$.  We first show that $\mathbf{p}=\mathbf{0}$ for any $(\mathbf{p}, \mathbf{q})\in E$. If not, let $i_0:=\min\{i\mid p_i>0, (\mathbf{p}, \mathbf{q})\in E\ \text{for\ some}\ \mathbf{q}\}$. It follows from \eqref{444} that
$$0\neq\sum_{\substack{(\mathbf{p}, \mathbf{q})\in E \\ p_{i_0}=P_{i_0}}}\mathbf{s}^{\mathbf{p}-P_{i_0}\omega_{i_0}}\mathbf{t}^{\mathbf{q}}\otimes v_{(\mathbf{p}, \mathbf{q})}\in M,$$
which has lower degree than $f(\mathbf{s}, \mathbf{t})$, contradicting the minimality  of $f(\mathbf{s}, \mathbf{t})$. Hence, $\mathbf{p}=\mathbf{0}$ for any $(\mathbf{p}, \mathbf{q})\in E$. Combining this with the fact that $H_i(t_i)$ is a polynomial in $t_i$ and \eqref{222}, we get
\begin{equation}\label{vnm}\sum_{(\mathbf{0}, \mathbf{q})\in E}H_{i}(t_i)\mathbf{t}^{\mathbf{q}}\otimes v_{(\mathbf{0}, \mathbf{q})}\in M.\end{equation}
Now choose a minimal $j_0$ with $1\leq j_0\leq m$ such that $q_{j_0}>0$. Then \eqref{333} along with \eqref{vnm} forces $\sum_{(\mathbf{0}, \mathbf{q})\in E}q_{j_0}\mathbf{t}^{\mathbf{q}-\omega_{j_0}}\otimes v_{(\mathbf{0}, \mathbf{q})}\in M$, which is a nonzero element in $M$ and  has lower degree than $f(\mathbf{s}, \mathbf{t})$. This is a contradiction with the choice of $f(\mathbf{s}, \mathbf{t})$. Therefore, ${\rm deg\,}(f(\mathbf{s}, \mathbf{t}))=\mathbf{0}$, as desired.
\end{proof}
We now turn to the case where $\lambda_1, \ldots, \lambda_m$ are not pairwise distinct. Actually,  $\bm{T}$ is reducible in this case and it suffices to show the reducibility of $\Phi(\lambda,\a_1,\b_1,\r_1,h_1(t_1))\otimes\Phi(\lambda,\a_2,\b_2,\r_2,h_2(t_2))$. We still use the notations as before, only taking $m=2$ and $V$ as the $1$-dimensional trivial module. For convenience, we identify $\Phi(\lambda,\a_1,\b_1,\r_1,h_1(t_1))\otimes\Phi(\lambda,\a_2,\b_2,\r_2,h_2(t_2))=\C[s_1, s_2, t_1, t_2]$. For any $l\in\Z_+$, denote$$N_l={\rm span\,}\{s_1^r(s_1+s_2)^u\C[t_1, t_2]\mid r, u\in\Z_+, r\leq l\}.$$
Clearly, $N_l\subset N_{l+1}$. Moreover, we have the following.
\begin{prop}\label{theo2}
Keep notations as above, then each $N_l$ is a proper submodule of  $$\Phi(\lambda,\a_1,\b_1,\r_1,h_1(t_1))\otimes\Phi(\lambda,\a_2,\b_2,\r_2,h_2(t_2)).$$
\end{prop}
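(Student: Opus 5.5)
The plan is to verify directly that each $N_l$ is stable under $L_n,W_n,I_n,J_n$ for all $n\in\Z$, and then to exhibit an element of the tensor product outside $N_l$. Both steps rest on the change of variables $x=s_1$, $y=s_1+s_2$. Since $s_1,s_2$ are algebraically independent over $\C[t_1,t_2]$, so are $x,y$. Hence the elements $s_1^r(s_1+s_2)^u t_1^{a}t_2^{b}$ with $r,u,a,b\in\Z_+$ form a basis of $\C[s_1,s_2,t_1,t_2]$, and $N_l$ is exactly the span of those basis elements with $r\leq l$. Properness is then immediate: $s_1^{l+1}\notin N_l$, and $N_l\neq0$ since $1\in N_l$.

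For stability, I will write the action of each generator on a tensor $F=f(s_1,t_1)\otimes g(s_2,t_2)$, and by linearity on any $F\in\C[s_1,s_2,t_1,t_2]$, in a uniform shape. Because both factors carry the same $\lambda$, the formulas of Section \ref{pre} give
$$X_n F=\lambda^n\,\mathcal{D}_{X,n}\big(F(s_1-n,s_2-n,t_1,t_2)\big),\qquad X\in\{L,W,I,J\}.$$
Here $\mathcal{D}_{X,n}$ is the sum of the operators acting on the two factors. Concretely:
\begin{itemize}
\item For $L_n$, the operator is $\mathcal{D}_{L,n}=(s_1+s_2)+h_{1,(n)}(t_1)+h_{2,(n)}(t_2)-n(t_1-n\a_1)\partial_{t_1}-n(t_2-n\a_2)\partial_{t_2}$.
\item For $W_n$, it is multiplication by $(t_1-n\a_1)+(t_2-n\a_2)$.
\item For $I_n$, it is $\r_1+\r_2-n\b_1H_1(t_1)-n\b_2H_2(t_2)+n\b_1\partial_{t_1}+n\b_2\partial_{t_2}$.
\item For $J_n$, it is multiplication by $\b_1+\b_2$.
\end{itemize}
The key point is that the only $s$-dependence in any $\mathcal{D}_{X,n}$ is multiplication by $y=s_1+s_2$ in the $L_n$ case. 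All other parts are operators in $t_1,t_2$ alone, namely multiplication by polynomials in $t_i$ and $\partial_{t_i}$, and these commute with multiplication by $s_1^r y^u$.

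Given this shape, stability follows in two steps. First, the shift sends $s_1^r y^u\,c(t_1,t_2)$ to $(s_1-n)^r(y-2n)^u c(t_1,t_2)$. Expanding binomially, this is a combination of $s_1^{r'}y^{u'}c(t_1,t_2)$ with $r'\le r$, so the shift preserves $N_l$. Second, applying $\mathcal{D}_{X,n}$ either multiplies by $y$, which raises $u$ and leaves $r$ unchanged, or acts only on the $\C[t_1,t_2]$ coefficient. Either way the result stays in $N_l$. Hence $X_nN_l\subseteq N_l$ for all $X,n$, and $N_l$ is a submodule.

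The only genuine care point, and the place I expect the main obstacle, is confirming that the combined operator really has this form. I need the coefficient of $\lambda^n$ to be identical for both factors, which is where equal $\lambda$ is used. I also need to check that the $s$-terms from $L_n$ on the two factors combine exactly into $s_1+s_2$ and not some other combination. I will check this carefully against the formulas for $L_n$ and $I_n$ from Section \ref{pre}. The rest is bookkeeping.
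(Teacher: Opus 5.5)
Your properness argument is correct, and more explicit than the paper, which only checks stability: since the $s_1^r(s_1+s_2)^ut_1^at_2^b$ form a basis, $s_1^{l+1}\notin N_l$. The gap is in the stability step. Your starting formula $X_nF=\lambda^n\mathcal{D}_{X,n}\big(F(s_1-n,s_2-n,t_1,t_2)\big)$ is not the action on a tensor product of $\mathfrak L$-modules. There $X_n$ acts as $X_n\otimes 1+1\otimes X_n$, so each of the two summands shifts only its own variable.

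Write $\sigma_iF$ for $F$ with $s_i$ replaced by $s_i-n$, and $y=s_1+s_2$. Then, for instance,
\[
\lambda^{-n}W_nF=(t_1-n\alpha_1)\,\sigma_1F+(t_2-n\alpha_2)\,\sigma_2F,\qquad \lambda^{-n}J_nF=\beta_1\,\sigma_1F+\beta_2\,\sigma_2F,
\]
with $\sigma_1(s_1^ry^u)=(s_1-n)^r(y-n)^u$ and $\sigma_2(s_1^ry^u)=s_1^r(y-n)^u$. Compare the paper's $W_n$ formula, which has $(s_1+s_2-n)^u$, not your $(y-2n)^u$. For $W_n,I_n,J_n$ your conclusion survives, since $\sigma_1,\sigma_2$ preserve $N_l$ and the remaining operators act only on $t_1,t_2$.

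For $L_n$, however, the $s$-dependent part is $s_1\,\sigma_1F+s_2\,\sigma_2F$, not multiplication by $y$ applied to one shifted function. For $F=s_1^ly^uc(t_1,t_2)$, each summand separately leaves $N_l$: both $s_1\sigma_1F$ and $s_2\sigma_2F=(y-s_1)\sigma_2F$ contain $\pm s_1^{l+1}(y-n)^uc$. So your ``key point'' is false, and the step you called bookkeeping is where the actual content lies.

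The missing idea is the cancellation of these leading terms. For $F=s_1^ry^uc(t_1,t_2)$,
\[
s_1\,\sigma_1F+s_2\,\sigma_2F=\big(s_1(s_1-n)^r-s_1^{r+1}\big)(y-n)^u c+s_1^r\,y\,(y-n)^u c,
\]
and $s_1(s_1-n)^r-s_1^{r+1}$ has degree at most $r$ in $s_1$, so the sum lies in $N_l$. The $h_{i,(n)}(t_i)$ and $\partial_{t_i}$ terms then act on $\sigma_iF\in N_l$ through $t_i$ only.

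This cancellation is precisely where $\lambda_1=\lambda_2$ enters: the two summands carry the same factor $\lambda^n$. With $\lambda_1\neq\lambda_2$, the $s_1^{r+1}$ terms would appear with coefficient $\lambda_1^n-\lambda_2^n$ and survive, consistent with Theorem~\ref{theo1}. The paper's explicit computation of $L_n\big(s_1^r(s_1+s_2)^ug_1(t_1)g_2(t_2)\big)$ carries out exactly this cancellation. Once you replace your formula by the correct coproduct action and insert this cancellation, your argument is complete.
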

\begin{proof}
Fix any $l\in\Z_+$. For any element of the form $s_1^r(s_1+s_2)^ug_1(t_1)g_2(t_2)\in N_l$, with $r, u\in\Z_+, r\leq l, g_1(t_1)\in\C[t_1], g_2(t_2)\in\C[t_2]$, we can compute
\begin{align*}
&\lambda^{-n} L_n\big( s_1^r (s_1 + s_2)^u g_1(t_1) g_2(t_2) \big)\\
&=\lambda^{-n} L_n \Big( \sum_{i=0}^u \binom{u}{i} s_1^{r+u-i} s_2^i g_1(t_1) g_2(t_2) \Big)\\
&= \lambda^{-n} \sum_{i=0}^u \binom{u}{i} \Big( L_n(s_1^{r+u-i} g_1(t_1)) s_2^i g_2(t_2)+ s_1^{r+u-i} g_1(t_1) L_n(s_2^i g_2(t_2)) \Big)\\
&=\sum_{i=0}^u \binom{u}{i}s_2^i g_2(t_2)\big((s_1+h_{(n)}(t_1))(s_1-n)^{r+u-i}g_1(t_1)-n(t_1-n\a_1)(s_1-n)^{r+u-i}g_1'(t_1)\big)\\
&\quad +\sum_{i=0}^u \binom{u}{i}s_1^{r+u-i}g_1(t_1)\big((s_2+h_{(n)}(t_2))(s_2-n)^{i}g_2(t_2)-n(t_2-n\a_2)(s_2-n)^{i}g_2'(t_2)\big)\\
&= (s_1(s_1 - n)^r-s_1^{r+1}) (s_1 + s_2 - n)^u g_1(t_1) g_2(t_2) + s_1^r (s_1+s_2) (s_1 + s_2 - n)^u g_1(t_1) g_2(t_2)\\
&\quad + (s_1 - n)^r (s_1 + s_2 - n)^u h_{(n)}(t_1) g_1(t_1) g_2(t_2) + s_1^r (s_1 + s_2 - n)^u h_{(n)}(t_2) g_1(t_1) g_2(t_2)\\
&\quad - (s_1 - n)^r (s_1 + s_2 - n)^u n (t_1 - n \a_1) g_1'(t_1) g_2(t_2) - s_1^r (s_1 + s_2 - n)^u n(t_2 -n\a_2) g_1(t_1) g_2'(t_2),
\end{align*}
\begin{align*}
&\lambda^{-n} W_n\big( s_1^r (s_1 + s_2)^u g_1(t_1) g_2(t_2) \big) &&\\
%&= \lambda^{-n} W_n \Big( \sum_{i=0}^u \binom{u}{i} s_1^{r+u-i} s_2^i g_1(t_1) g_2(t_2) \Big) &&\\
&= \lambda^{-n} \sum_{i=0}^u \binom{u}{i} \Big( W_n(s_1^{r+u-i} g_1(t_1)) s_2^i g_2(t_2) + s_1^{r+u-i} g_1(t_1) W_n(s_2^i g_2(t_2)) \Big) &&\\
&=\sum_{i=0}^u \binom{u}{i}\big((t_1-n\a_1)(s_1-n)^{r+u-i}g_1(t_1)s_2^i g_2(t_2)+s_1^{r+u-i}g_1(t_1)(t_2-n\a_2)(s_2-n)^i g_2(t_2)\big)&&\\
&= (s_1 - n)^r (s_1 + s_2 - n)^u (t_1 - n a_1) g_1(t_1) g_2(t_2) + s_1^r (s_1 + s_2 - n)^u (t_2 - na_2) g_1(t_1) g_2(t_2),
\end{align*}
\begin{align*}
&\lambda^{-n} I_n\big( s_1^r (s_1 + s_2)^u g_1(t_1) g_2(t_2) \big)\\
%&=\lambda^{-n} L_n \Big( \sum_{i=0}^u \binom{u}{i} s_1^{r+u-i} s_2^i g_1(t_1) g_2(t_2) \Big)\\
&= \lambda^{-n} \sum_{i=0}^u \binom{u}{i} \Big( I_n(s_1^{r+u-i} g_1(t_1)) s_2^i g_2(t_2)+ s_1^{r+u-i} g_1(t_1) I_n(s_2^i g_2(t_2)) \Big)\\
&=\sum_{i=0}^u \binom{u}{i}s_2^i g_2(t_2)(s_1-n)^{r+u-i}\big(\r_1g_1(t_1)+n\b_1(g_1'(t_1)-g_1(t_1)H_1(t_1))\big)\\
&\quad +\sum_{i=0}^u \binom{u}{i}s_1^{r+u-i}g_1(t_1)(s_2-n)^{i}\big(\r_2g_2(t_2)+n\b_2(g_2'(t_2)-g_2(t_2)H_2(t_2))\big)\\
&=(s_1 - n)^r(s_1 + s_2 - n)^u g_2(t_2)\big(\r_1g_1(t_1)+n\b_1(g_1'(t_1)-g_1(t_1)H_1(t_1))\big)\\
&\quad + s_1^r(s_1 + s_2 - n)^ug_1(t_1)\big(\r_2g_2(t_2)+n\b_2(g_2'(t_2)-g_2(t_2)H_2(t_2))\big) 
\end{align*}
and
\begin{align*}
&\lambda^{-n} J_n\big( s_1^r (s_1 + s_2)^u g_1(t_1) g_2(t_2) \big)\\
%&= \lambda^{-n} J_n \Big( \sum_{i=0}^u \binom{u}{i} s_1^{r+u-i} s_2^i g_1(t_1) g_2(t_2) \Big) &&\\
&= \lambda^{-n} \sum_{i=0}^u \binom{u}{i} \Big( J_n(s_1^{r+u-i} g_1(t_1)) s_2^i g_2(t_2) + s_1^{r+u-i} g_1(t_1) J_n(s_2^i g_2(t_2)) \Big)\\
&=\sum_{i=0}^u \binom{u}{i}\big(\b_1(s_1-n)^{r+u-i}g_1(t_1) s_2^i g_2(t_2)+\b_2s_1^{r+u-i} g_1(t_1)(s_2-n)^i g_2(t_2)\big)\\
&= \b_1(s_1 - n)^r (s_1 + s_2 - n)^u  g_1(t_1) g_2(t_2) + \b_2s_1^r (s_1 + s_2 - n)^u g_1(t_1) g_2(t_2).
\end{align*}
These imply that $$X_n\big(s_1^r(s_1+s_2)^ug_1(t_1)g_2(t_2)\big)\in N_l, \quad{\rm where}\,\,X_n\in\{L_n, W_n,I_n,J_n\mid\forall \,n\in\Z\}.$$
Hence, $N_l$ is a submodule of $\Phi(\lambda,\a_1,\b_1,\r_1,h_1(t_1))\otimes\Phi(\lambda,\a_2,\b_2,\r_2,h_2(t_2))$, completing the proof.
	\end{proof}
As a direct consequence of Theorem \ref{theo1} and Proposition \ref{theo2}, we have the following sufficient and necessary condition for a tensor product module to be irreducible.
\begin{coro}\label{theo3}
The $\mathfrak L$-module  $\bm{T}$ is irreducible if and only if $\lambda_1, \ldots, \lambda_m$ are pairwise distinct.
\end{coro}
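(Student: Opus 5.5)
The "if" direction is exactly Theorem \ref{theo1}, so the work lies in the "only if" direction. Suppose $\lambda_1,\ldots,\lambda_m$ are not pairwise distinct. After reordering the tensor factors (the tensor product of modules over a Lie algebra is symmetric up to the flip isomorphism), we may assume $\lambda_1=\lambda_2=\lambda$. Then
\[
\bm{T}\cong \big(\Phi(\lambda,\a_1,\b_1,\r_1,h_1(t_1))\otimes\Phi(\lambda,\a_2,\b_2,\r_2,h_2(t_2))\big)\otimes W,
\]
where $W=\big(\otimes_{i=3}^m\Phi(\lambda_i,\a_i,\b_i,\r_i,h_i(t_i))\big)\otimes V$ (for $m=2$ this is just $V$).

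By Proposition \ref{theo2}, $N_0$ is a submodule of the first factor. Hence $N_0\otimes W$ is a submodule of $\bm{T}$, because $\mathfrak L$ acts on tensor products via the coproduct $x\mapsto x\otimes1+1\otimes x$.

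It remains to check that $N_0\otimes W$ is nonzero and proper. Nonzero is clear, since $1\in N_0$ and $W\neq0$. For properness it suffices to see that $N_0\subsetneq\C[s_1,s_2,t_1,t_2]$; tensoring with the nonzero space $W$ then preserves strict inclusion, since over a field $(A/B)\otimes W\neq 0$ whenever $A/B\neq0$. The space $N_0$ equals $\C[s_1+s_2]\otimes\C[t_1,t_2]$. Since $s_1+s_2$ and $s_1$ are algebraically independent, the element $s_1$ does not lie in $\C[s_1+s_2,t_1,t_2]$. Therefore $N_0$ is proper, as Proposition \ref{theo2} already asserts.

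Consequently $\bm{T}$ is reducible whenever two of the $\lambda_i$ coincide. Combined with Theorem \ref{theo1}, this gives the equivalence.

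The only point requiring care is the reordering and tensoring step: one must verify that tensoring the submodule of Proposition \ref{theo2} with the remaining factors still yields a proper nonzero submodule. This is routine, so no real obstacle is expected.
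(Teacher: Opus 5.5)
Your proposal is correct and follows the paper's own route. The ``if'' direction is Theorem \ref{theo1}, and the ``only if'' direction uses the proper submodules $N_l$ of Proposition \ref{theo2}; you take $N_0=\C[s_1+s_2,t_1,t_2]$. Your reordering of the factors and tensoring $N_0$ with the remaining factors $W$ spells out the reduction that the paper asserts without detail (``it suffices to show the reducibility of $\Phi(\lambda,\a_1,\b_1,\r_1,h_1(t_1))\otimes\Phi(\lambda,\a_2,\b_2,\r_2,h_2(t_2))$'').
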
	
\section{Isomorphism classes of the tensor product modules}
In this section, we will determine the necessary and sufficient conditions for two irreducible tensor product modules $\bm{T}=(\otimes_{i=1}^{m}\Phi(\lambda_i, \alpha_i, \b_i, \r_i, h_i(t_i))\otimes V$ to be isomorphic. By Corollary \ref{theo3}, to ensure that $\bm{T}$ is irreducible, we suppose that $\lambda_1, \ldots, \lambda_m\in\C^*$ are pairwise distinct throughout this section. 

For any $f:=f(\mathbf{s}, \mathbf{t})\in \bm{T}$, we define
\begin{equation*} 
R_f = \lim\limits_{l\to\infty} \operatorname{rank} \{f, W_n(f), J_n(f)\mid n\ge l\}
\end{equation*}
and
$$R_{\bm{T}} = {\rm inf\,}\{R_{f}\mid 0\neq f\in \bm{T}\},$$
where ${\rm rank\,}(Y)={\rm dim\,}{\rm span\,}(Y)$ for any subset $Y$ in a vector space. If we write  $f$ in the form \eqref{a12}, there exists a minimal positive integer $K(f)$ such that $W_nv_{(\mathbf{p}, \mathbf{q})}=J_nv_{(\mathbf{p}, \mathbf{q})}=0$ for all $n\geq K(f)$ and $(\mathbf{p}, \mathbf{q})\in E$.

Let $\bm{Q}:=\otimes_{i=1}^m\C[t_i]\otimes V\subset \bm{T}$. We have the following result describing the property of the invariants $R_{f}$ and $R_{\bm{T}}$.
\begin{lemm}\label{theoo5}
For any nonzero $f\in \bm{T}$, the following statements hold.
\begin{trivlist}
\item[(1)]For all $l\geq K(f)$, $R_f=\operatorname{rank} \{f,  W_n(f), J_n(f)\mid n\ge l\}$.
\item[(2)]$R_f\geq m+1$ and the equality holds if and only if  $0\neq f\in \bm{Q}$.
\item[(3)]$R_{\bm{T}}=m+1$.
\end{trivlist}
\end{lemm}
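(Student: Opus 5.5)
Throughout, fix a nonzero $f$ written as in \eqref{a12}. The plan is to expand $W_n(f)$ and $J_n(f)$ as linear combinations of the functions $n^j\lambda_i^n$ with vector coefficients. Lemma \ref{prop1} then identifies the span in the definition of $R_f$ with a fixed, explicitly computable subspace. All three statements are read off from that subspace.

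\emph{Step 1 (statement (1)).} For $n\ge K(f)$, $W_n$ and $J_n$ kill every $v_{(\mathbf{p},\mathbf{q})}$. Hence they act only on the tensor factors, and as in \eqref{bbn2} we get
$$W_n(f)=\sum_{i=1}^m\sum_{j=0}^{P_i+1}n^j\lambda_i^n\,w_{i,j},\qquad J_n(f)=\sum_{i=1}^m\sum_{j=0}^{P_i}n^j\lambda_i^n\,u_{i,j}.$$
Here $w_{i,j},u_{i,j}\in\bm{T}$ do not depend on $n$. They are obtained by expanding $\lambda_i^n(t_i-n\a_i)(s_i-n)^{p_i}$ and $\lambda_i^n\b_i(s_i-n)^{p_i}$ in the $i$-th factor. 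Since the $\lambda_i$ are pairwise distinct, Lemma \ref{prop1} says the matrix $(n^j\lambda_i^n)$, with rows indexed by any block of consecutive $n\ge l$ of the right length, is invertible. So each $w_{i,j}$ and $u_{i,j}$ is a linear combination of finitely many $W_n(f)$, respectively $J_n(f)$, with $n\ge l$. Conversely, each $W_n(f)$ and $J_n(f)$ lies in the span of these vectors. Therefore, for every $l\ge K(f)$,
$$\operatorname{span}\{f,W_n(f),J_n(f)\mid n\ge l\}=S_f:=\operatorname{span}\{f,w_{i,j},u_{i,j}\}.$$
This is independent of $l$, which gives (1), and $R_f=\dim S_f$.

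\emph{Step 2 (lower bound $R_f\ge m+1$).} The coefficient of $n^0\lambda_i^n$ in $W_n(f)$ is $w_{i,0}=t_if$, meaning multiplication by $t_i$ in the $i$-th factor. So $f,t_1f,\dots,t_mf\in S_f$. These $m+1$ vectors are linearly independent. Indeed, a relation $(c+\sum_j c_jt_j)f=0$ forces all coefficients to vanish, because multiplication by a nonzero polynomial in $\C[\mathbf{s},\mathbf{t}]$ is injective on $\C[\mathbf{s},\mathbf{t}]\otimes V$ (compare leading terms in the order $\succ$). Hence $R_f\ge m+1$.

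\emph{Step 3 (equality case).} If $f\in\bm{Q}$, then all $p_i=0$, so $W_n(f)=\sum_i\lambda_i^n(t_if-n\a_if)$ and $J_n(f)=\sum_i\lambda_i^n\b_if$. Thus $S_f=\operatorname{span}\{f,t_1f,\dots,t_mf\}$ and $R_f=m+1$.

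If $f\notin\bm{Q}$, choose $i$ with $P_i>0$. The coefficient of $n^{P_i+1}\lambda_i^n$ in $W_n(f)$ is $\pm\a_iX_i$, and that of $n^{P_i}\lambda_i^n$ in $J_n(f)$ is $\pm\b_iX_i$, where
$$X_i=\sum_{p_i=P_i}\mathbf{s}^{\mathbf{p}-P_i\omega_i}\mathbf{t}^{\mathbf{q}}\otimes v_{(\mathbf{p},\mathbf{q})}\neq0.$$
Since $\a_i\ne0$ or $\b_i\ne0$, we get $X_i\in S_f$. Now $X_i$ has $s_i$-degree $0$. On the other hand, any nonzero $(c+\sum_jc_jt_j)f$ has nonzero $s_i^{P_i}$-component, by the same injectivity as in Step 2, and $P_i>0$. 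Hence $X_i\notin\operatorname{span}\{f,t_jf\}$, and $R_f\ge m+2$. This proves (2).

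\emph{Step 4 (statement (3)).} Statement (3) follows from (2), since $\bm{Q}\ne0$.

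The main obstacle I expect is Step 1. One must check carefully that the invertibility in Lemma \ref{prop1} really lets each coefficient vector be recovered from $W_n(f)$, $J_n(f)$ with $n\ge l$ only, and the step needs $n\ge K(f)$ so that no action on $V$ appears. The remaining steps are bookkeeping with leading terms.
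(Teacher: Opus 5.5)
Your proposal is correct and follows the paper's proof. You use Lemma \ref{prop1} to identify the span with the fixed space generated by $f$ and the coefficient vectors $a_{i,z}, b_{i,k}$. You then observe that $f\in\bm{Q}$ contributes only $f,t_1f,\dots,t_mf$, and for $f\notin\bm{Q}$ you extract the top coefficient $\pm\a_iX_i$ or $\pm\b_iX_i$. The one difference is that you justify the dimension count (independence of $f,t_1f,\dots,t_mf$, and $X_i$ lying outside their span) by injectivity of polynomial multiplication and a comparison of $s_i^{P_i}$-components. The paper instead uses the degree order with the minimal index $i_1$ and leaves the count implicit.
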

\begin{proof}
Denote $R_{f, l}= {\rm rank\,}\{f, W_n(f), J_n(f)\mid n\geq l\}$ for any $l\in\N$, then it suffices to show that $R_{f, l}=R_{f, K(f)}$ for all $l\geq K(f)$. For $1\leq i \leq m, 0\leq z \leq P_i+1, 0\leq k \leq P_i$, denote
\begin{align*}a_{i,z}&:=\sum_{(\mathbf{p}, \mathbf{q})\in E}s_1^{p_1} t_1^{q_1}\otimes\cdots\otimes\big(\binom{p_i}{z} s_i^{p_{i}-z} t_i^{q_i+1}+ \binom{p_i}{z-1} \a_i s_i^{p_i -z +1}t_i^{q_i}\big)\otimes\cdots\otimes s_m^{p_m}t_m^{q_m}\otimes v_{(\mathbf{p}, \mathbf{q})},\\
b_{i,k}&:=\sum_{(\mathbf{p}, \mathbf{q})\in E}s_1^{p_1} t_1^{q_1}\otimes\cdots\otimes\binom{p_i}{k}\b_i s_i^{p_{i}-k} t_i^{q_i}\otimes\cdots\otimes s_m^{p_m}t_m^{q_m}\otimes v_{(\mathbf{p}, \mathbf{q})}.
\end{align*}
For any $n\geq l\geq K(f)$, it follows from \eqref{bbn2} and
\begin{align*}
W_n(f)&=\sum_{(\mathbf{p}, \mathbf{q})\in E}\sum_{i=1}^ms_1^{p_1}t_1^{q_1}\otimes\cdots \otimes W_n(s_i^{p_{i}}t_i^{q_i})\otimes\cdots\otimes s_m^{p_m}t_m^{q_m}\otimes v_{(\mathbf{p}, \mathbf{q})}\nonumber\\
&=\sum_{(\mathbf{p}, \mathbf{q})\in E}\sum_{i=1}^ms_1^{p_1}t_1^{q_1}\otimes\cdots \otimes \big(\lambda_i^n(t_i-n\a_i)(s_i-n)^{p_i} t_i^{q_i}\big)\otimes\cdots\otimes s_m^{p_m}t_m^{q_m}\otimes v_{(\mathbf{p}, \mathbf{q})}\nonumber\\
&=\sum_{(\mathbf{p}, \mathbf{q})\in E}\sum_{i=1}^{m}\sum_{z=0}^{p_i+1}(-1)^z n^z\lambda_i^n s_1^{p_1} t_1^{q_1}\otimes\cdots\otimes\big(\binom{p_i}{z} s_i^{p_{i}-z} t_i^{q_i+1} + \binom{p_i}{z-1} \a_i s_i^{p_i -z+1}t_i^{q_i}\big)\otimes\cdots\nonumber\\
&\quad\otimes s_m^{p_m}t_m^{q_m}\otimes v_{(\mathbf{p}, \mathbf{q})}
\end{align*}
that 
$${\rm span\,}\{f, W_n(f), J_n(f)\mid n\geq l\}={\rm span\,}\{f, a_{i, z}, b_{i, k}\mid 0\leq z\leq  P_i+1, 0\leq k \leq P_i, 1\leq i \leq m\},$$
proving (1).

(2) If $f\in\bm{Q}$, then we have $R_{f}={\rm rank\,}\{f, a_{i, 0}\mid 1\leq i\leq m\}=m+1$. Now suppose ${\rm deg\,}(f)=(\mathbf{p}^\prime, \mathbf{q}^\prime)=(p_1^\prime,\ldots, p_m^\prime,  q_1^\prime,\ldots, q_m^\prime)$ with $p_i^\prime>0$ for some $1\leq i\leq m$. Let $i_1={\rm min}\{i\mid p_i^\prime>0\}$.  Then we have ${\rm deg\,}(a_{i_1, P_{i_1}+1})\prec (\mathbf{p}^\prime, \mathbf{q}^\prime)$ if $\a\in\C^*$, or ${\rm deg\,}(b_{i_1, P_{i_1}})\prec (\mathbf{p}^\prime, \mathbf{q}^\prime)$ if $\b\in\C^*$.
Hence the space spanned by $f, a_{i, 0}, a_{i_1, P_{i_1}+1}, b_{i_1, P_{i_1}}, 1\leq i\leq m$ has dimension $m+2$, which means $R_{f}\geq m+2$. This proves (2), from which (3) follows trivially.
\end{proof}
	
Now we are ready to prove our isomorphism theorem. Let $\bm{T}$ be the tensor module defined before. Now we take another tensor module,
\begin{equation*}
\bm{T'}:=\bm{T'}(\bm{\lambda}', \bm{\alpha}',  \bm{\b}', \bm{\r}', \bm{g(t)}, V')=(\otimes_{j=1}^{m'}\Phi(\lambda_j', \alpha_j', \b_j', \r_j', g_j(t_j)))\otimes V',
\end{equation*}
where $\bm{\lambda}'=(\lambda_1', \ldots, \lambda_{m'}')$, $\bm{\alpha}'=(\alpha_1', \ldots, \alpha_{m'}')$, $\bm{\b}'=(\b_1', \ldots, \b_{m'}')$, $\bm{\r}'=(\r_1', \ldots, \r_{m'}')$, $\bm{g(t)}=(g_1(t_1), \ldots, g_{m'}(t_{m'}))$ and $V'$ is an irreducible restricted module. We identify $$\Phi(\lambda_j', \alpha_j', \b_j', \r_j', g_j(t_j)))=\C[s_j, t_j]$$ for $1\leq j\leq m'$. Let $G_j(t_j)=\frac{g_j(t_j)-g_j(\a_j')}{t_j-\a_j'}$. To ensure that $\bm{T}$ and $\bm{T'}$ are irreducible, we suppose that $\lambda_1, \ldots, \lambda_{m}$  are pairwise distinct 
as well as  $\lambda_1', \ldots, \lambda_{m'}'$ are pairwise distinct.
	
\begin{theo}\label{theo4}
$\bm{T}\cong \bm{T'}$ as $\mathfrak L$-modules if and only if $m=m', V\cong V'$ and 
$$(\lambda_i, \alpha_i, \b_i, \r_i,  h_i(t_i))=(\lambda_i', \alpha_i', \beta_i', \gamma_i',g_i(t_i))$$ for $1\leq i\leq m$ by renumbering the indices $(\lambda_i', \alpha_i', \beta_i', \gamma_i', g_i(t_i))$ if necessary.
\end{theo}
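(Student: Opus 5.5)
The ``if'' direction is immediate: renumbering the tensor factors gives an isomorphism by permuting the factors, and $V\cong V'$ induces an isomorphism on the last factor. For the ``only if'' direction, fix an isomorphism $\psi:\bm{T}\to\bm{T'}$. Since $R_f$ is defined purely in terms of the actions of $W_n$ and $J_n$, it is preserved by module isomorphisms, so $R_{\psi(f)}=R_f$. Lemma \ref{theoo5}(3) then gives $m+1=R_{\bm{T}}=R_{\bm{T'}}=m'+1$, hence $m=m'$. Lemma \ref{theoo5}(2) shows that $\psi$ maps the nonzero elements of $\bm{Q}$, which are exactly those attaining the minimal rank, onto those of $\bm{Q}'=\otimes_j\C[t_j]\otimes V'$. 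Hence $\psi(\bm{Q})=\bm{Q}'$.

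Next I would take $0\neq v\in V$ and write
\[
\psi(1\otimes\cdots\otimes1\otimes v)=\sum_{\mathbf{q}}\mathbf{t}^{\mathbf{q}}\otimes v'_{\mathbf{q}}\in\bm{Q}'.
\]
I would then compare $W_n$ and $J_n$ on both sides for $n\ge K$. On the left we get
\[
W_n(1\otimes\cdots\otimes1\otimes v)=\sum_i\lambda_i^n(t_i-n\a_i)\otimes v
\]
and
\[
J_n(1\otimes\cdots\otimes1\otimes v)=\sum_i\lambda_i^n\b_i\otimes v.
\]
The analogous expressions in $\lambda_j'^n$ appear on the right. The elements of the span are quasi-polynomials in $n$, and $\psi$ is linear and independent of $n$. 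Applying Lemma \ref{prop1} (Vandermonde independence of $n^k\lambda^n$), the set $\{\lambda_i\}$ must coincide with $\{\lambda'_j\}$ after renumbering, since otherwise an exponential present on one side would be absent on the other. Comparing the coefficients of $\lambda_i^n$ and $n\lambda_i^n$ then gives
\[
\psi\bigl(t_i\otimes v\bigr)=t_i\,\psi(1\otimes v),\qquad \a_i\,\psi(1\otimes v)=\a'_i\,\psi(1\otimes v),
\]
and $\b_i\,\psi(1\otimes v)=\b_i'\,\psi(1\otimes v)$. Hence $\a_i=\a_i'$ and $\b_i=\b_i'$. Iterating the $W_n$ comparison shows that $\psi$ commutes with multiplication by $t_i$ on $\bm{Q}$. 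Doing the same with $L_n$ (coefficients of $\lambda_i^n$, $n\lambda_i^n$, $n^2\lambda_i^n$, using \eqref{111}-style extraction) gives that $\psi$ intertwines multiplication by $s_i$. In the same way, comparing the $I_n$ and $L_n$ coefficients forces $\r_i=\r'_i$ and $h_i=g_i$. The $n^0\lambda_i^n$ terms give $\r_i$. The remaining identities are of the form $H_i\,\psi(\cdot)=G_i\,\psi(\cdot)$ plus terms determined by $\a_i$. These pin down $h_i(t)-h_i(\a_i)$, and the constant $h_i(\a_i)$ is recovered from the $n$-coefficient $tH+h(\a)$ in $L_n$.

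Consequently $\psi$ is $\C[\mathbf{s},\mathbf{t}]$-linear, so $\psi(\mathbf{s}^{\mathbf p}\mathbf t^{\mathbf q}\otimes v)=\mathbf{s}^{\mathbf p}\mathbf t^{\mathbf q}\psi(1\otimes v)$. Writing $\psi(1\otimes v)=\sum_{\mathbf q}\mathbf t^{\mathbf q}\otimes\tau_{\mathbf q}(v)$ with linear maps $\tau_{\mathbf q}:V\to V'$, I would finally show that $\tau=\tau_{\mathbf q}$ for a suitable nonzero $\mathbf q$ (say the top degree) is an $\mathfrak L$-module map $V\to V'$. To do this, apply $X\in\mathfrak L_k$ to $1\otimes v$, subtract the contribution of the $\Phi$ factors, which $\psi$ respects by the previous paragraph, and compare the top $\mathbf t$-degree parts. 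Irreducibility of $V$ and $V'$ then makes $\tau$ an isomorphism.

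The main obstacle is the coefficient-extraction step. We need $\psi(g\otimes v)$ for $g$ depending on $\mathbf t$ to transform consistently, and we must separate the contributions of the $V$-parts. This is handled by the choice $n\ge K$, which kills $\mathfrak L_n v$ for all relevant vectors, together with careful use of Lemma \ref{prop1}.
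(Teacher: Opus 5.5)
Much of your outline is sound. The invariance of $R_f$ gives $m=m'$ and $\psi(\bm{Q})=\bm{Q}'$, and the $\la_i^n$-coefficients of the $W_n$-comparison match the $\la$'s. The equalities $\a_i=\a_i'$, $\b_i=\b_i'$, $\r_i=\r_i'$ follow as you say. The $n^0\la_i^n$-coefficients of $W_n$ and $L_n$ (as in \eqref{111}, \eqref{222}) make $\psi$ $\C[\mathbf{s},\mathbf{t}]$-linear.

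The gap is that you never show $\psi(\mathbf{1}\otimes v)\in\mathbf{1}\otimes V'$, where $\mathbf{1}=1\otimes\cdots\otimes1$. The substitute you propose, a top-degree component $\tau_{\mathbf{q}}$, is not a module map. Your subtraction step tacitly assumes that $X$ acts on $\mathbf{t}^{\mathbf{q}}$ by multiplication by $X(\mathbf{1})$. This fails for $L_n$ because of the $-n\la^n t f'(t)$ and $n^2\la^n\a f'(t)$ terms:
\[
L_n(\mathbf{t}^{\mathbf{q}})=L_n(\mathbf{1})\,\mathbf{t}^{\mathbf{q}}-n\sum_{i}q_i\la_i^n\,\mathbf{t}^{\mathbf{q}}+n^2\sum_{i}\a_iq_i\la_i^n\,\mathbf{t}^{\mathbf{q}-\omega_i}.
\]
Comparing top $\mathbf{t}$-degree parts therefore gives
\[
\tau_{\mathbf{q}}(L_nv)=L_n\tau_{\mathbf{q}}(v)-n\bigl(\textstyle\sum_i q_i\la_i^n\bigr)\tau_{\mathbf{q}}(v),
\]
which is an intertwining relation only when $\mathbf{q}=\mathbf{0}$.

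The missing idea belongs in your parameter step, where you misidentify the extra terms. If $\b_i\neq0$, take the $n\la_i^n$-coefficient of the $I_n$-comparison; if $\a_i\neq0$, take the $n^2\la_i^n$-coefficient of the $L_n$-comparison. Either one reads
\[
(G_i-H_i)\,\psi(\mathbf{1}\otimes v)=\partial_{t_i}\psi(\mathbf{1}\otimes v).
\]
So the extra term is a $t_i$-derivative of $\psi(\mathbf{1}\otimes v)$, not something ``determined by $\a_i$''. Comparing $t_i$-degrees forces both $G_i=H_i$ and $\partial_{t_i}\psi(\mathbf{1}\otimes v)=0$ for every $i$. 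Hence $\psi(\mathbf{1}\otimes v)=\mathbf{1}\otimes\tau(v)$; this is the $q_i=0$ part of \eqref{mm0}, which yields \eqref{mm2} in the paper.

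You need this fact twice:
\begin{itemize}
\item For $h_i(\a_i)=g_i(\a_i)$: once $G_i=H_i$, the $n\la_i^n$-coefficient of the $L_n$-comparison only yields $\bigl(h_i(\a_i)-g_i(\a_i)+q_i\bigr)v'_{\mathbf{q}}=0$ for each $\mathbf{q}$ in the support. The conclusion requires $q_i=0$.
\item For the final step: once $\psi(\mathbf{1}\otimes v)=\mathbf{1}\otimes\tau(v)$, your subtraction argument goes through verbatim and gives $\tau(Xv)=X\tau(v)$, exactly as in the paper.
\end{itemize}
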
	
\begin{proof}
The sufficiency is obvious and it suffices to show the necessity. Let $\phi$ be an
$\mathfrak L$-module isomorphism from $\bm{T}$ to $\bm{T'}$. From Lemma \ref{theoo5} (3), we see that $m+1=R_{\bm{T}}=R_{\bm{T'}}=m'+1$, i.e., $m=m'$. This along with Lemma \ref{theoo5}(2) entails us to assume that
$$\phi(1\otimes\cdots \otimes1 \otimes v)=\sum_{(\mathbf{0}, \mathbf{q})\in E}t_1^{q_1}\otimes\cdots \otimes t_m^{q_m} \otimes v_{(\mathbf{0}, \mathbf{q})}$$
for a fixed $0\neq v\in V$. Using the same argument as in   \cite[Theorem 3.6]{CX}, we obtain that $\lambda_i=\lambda_i', \a_i=\a_i'$ for $1\leq i \leq m$ and 
\begin{equation}\label{bmm}
\phi(t_1^{r_1}\otimes\cdots \otimes t_m^{r_m} \otimes v)=\sum_{(\mathbf{0}, \mathbf{q})\in E}t_1^{r_1+q_1}\otimes\cdots \otimes t_m^{r_m+q_m} \otimes v_{(\mathbf{0}, \mathbf{q})}
\end{equation}
for any $r_1, r_2, \ldots, r_m\in\Z_+$.

Choose  $k$ sufficiently large such that $\mathfrak L_n(v)=\mathfrak L_n(v_{(\mathbf{0}, \mathbf{q})})=0$ for any $n\geq k$ and $(\mathbf{0}, \mathbf{q})\in E$. For any $n\geq k$,  using \eqref{bmm}, explicit calculations show that    
\begin{align}\label{vvc1}
0&=\phi\big(J_n(1\otimes\cdots \otimes1 \otimes v)\big)-J_n\big(\phi(1\otimes\cdots \otimes1 \otimes v)\big)\nonumber\\
&=\sum_{i=1}^m\lambda_i^n (\b_{i}-\b_{i}')\phi(1\otimes\cdots \otimes  1\otimes v),
\end{align}
\begin{eqnarray}\label{vvc2}
0&=&\phi\big(I_n(1\otimes\cdots \otimes1 \otimes v)\big)-I_n\big(\phi(1\otimes\cdots \otimes1 \otimes v)\big)\nonumber\\
&=&\sum_{i=1}^m\sum_{(\mathbf{0}, \mathbf{q})\in E}t_1^{q_1}\otimes\cdots\otimes
\lambda_i^n(\r_i-\r_{i}')t_i^{q_i}\otimes\cdots \otimes t_{m}^{q_m} \otimes v_{(\mathbf{0}, \mathbf{q})}\nonumber\\
&&+\sum_{i=1}^m\sum_{(\mathbf{0}, \mathbf{q})\in E}t_1^{q_1}\otimes\cdots\otimes
\lambda_i^n n\big((\b_{i}'G_{i}(t_{i})-\b_{i}H_{i}(t_{i}))t_i^{q_i}-\b_{i}'q_{i}t_{i}^{q_i-1}\big)\otimes\cdots \otimes t_{m}^{q_m} \otimes v_{(\mathbf{0}, \mathbf{q})}
\end{eqnarray}
and that
\begin{eqnarray}\label{vvc3}
0 &=&\phi\bigl(L_n(1\otimes\cdots \otimes 1\otimes v)\bigr)-L_n\big(\phi(1\otimes\cdots \otimes 1\otimes v)\big) \nonumber \\
&=&\sum_{i=1}^m \lambda_i^n \big(\phi(1\otimes\cdots \otimes s_i\otimes\cdots \otimes 1\otimes v)-\sum_{(\mathbf{0}, \mathbf{q})\in E}t_1^{q_1}\otimes\cdots\otimes s_it_{i}^{q_i}
\otimes\cdots \otimes t_{m}^{q_m} \otimes v_{(\mathbf{0}, \mathbf{q})}\big)\nonumber\\
&&+\sum_{i=1}^m\sum_{(\mathbf{0}, \mathbf{q})\in E}t_1^{q_1}\otimes\cdots\otimes
\lambda_i^n n(t_iH_i(t_i)-t_iG_i(t_i)+h_i(\a_i)-g_i(\a_i)+q_i)t_i^{q_i}\otimes\cdots\nonumber\\
&&\otimes t_{m}^{q_m} \otimes v_{(\mathbf{0}, \mathbf{q})}+\sum_{i=1}^m\sum_{(\mathbf{0}, \mathbf{q})\in E}t_1^{q_1}\otimes\cdots\otimes\lambda_i^n n^2\a_i(G_i(t_i)t_i^{q_i}-H_i(t_i)t_i^{q_i}-q_it_i^{q_i-1})\otimes\cdots\nonumber\\
&&\otimes t_{m}^{q_m} \otimes v_{(\mathbf{0}, \mathbf{q})}.
\end{eqnarray}
From Lemma \ref{prop1}, we know that for any $1\leq i \leq m$, the coefficients of $\lambda_i^n$ in \eqref{vvc1} and  \eqref{vvc2} should both be zero, forcing  $\b_i=\b_{i}'$ and $\r_i=\r_{i}'$, respectively.  For the case $\b_i\in\C^*$,  observing that  the coefficients of  $n\lambda_i^n$  in \eqref{vvc2} and  \eqref{vvc3} both vanish gives
\begin{equation*}(G_{i}(t_{i})-H_{i}(t_{i}))t_i^{q_i}-q_{i}t_{i}^{q_i-1}=0 \quad {\rm and}\quad t_iH_i(t_i)-t_iG_i(t_i)+h_i(\a_i)-g_i(\a_i)+q_i=0,\end{equation*}
from which we infer  that 
\begin{equation}\label{mm0}h_i(t_i)=g_i(t_i)\quad {\rm and} \quad q_i=0,\ \ \ \ \forall\,1\leq i \leq m. \end{equation}
For the remaining case  $\a_i\in\C^*$, \eqref{mm0} were obtained in \cite[Theorem 3.6]{CX} by considering, respectively, the coefficients at $n\lambda_i^n$  and $n^2\lambda_i^n$ in \eqref{vvc3}. In both cases, we have
\begin{equation}\label{mm2}\phi(1\otimes\cdots \otimes1 \otimes v)=1\otimes\cdots \otimes1 \otimes \tau(v)\end{equation}
for some $\tau(v)\in V'$. Also, since the coefficient of $\lambda_i^n$ in  \eqref{vvc3}  vanishes, it follows that
\begin{equation*}\label{mm3}\phi(1\otimes\cdots \otimes s_i\otimes\cdots \otimes1 \otimes v)=1\otimes\cdots \otimes s_i\otimes\cdots1 \otimes \tau(v), \ \ \ \ \forall\,1\leq i\leq m.\end{equation*}
Combining this with \eqref{bmm} and \eqref{mm2} yields 
\begin{equation*}
\phi(X_n (1\otimes\cdots \otimes1)\otimes v)=X_n(1\otimes\cdots \otimes1 )\otimes\tau(v), \quad{\rm where}\,\,X_n\in\{L_n, W_n, I_n, J_n\mid\forall\,n\in\Z\}.
\end{equation*}
This together with
$$\phi(X_n (1\otimes\cdots \otimes1\otimes v))=X_n (\phi(1\otimes\cdots \otimes1\otimes v)),\quad{\rm where}\,\,X_n\in\{L_n, W_n, I_n, J_n\mid\forall\,n\in\Z\}$$
gives
$$\phi(1\otimes\cdots \otimes1\otimes X_n( v))=1\otimes\cdots \otimes1\otimes X_n(\tau(v)),\quad{\rm where}\,\,X_n\in\{L_n, W_n, I_n, J_n\mid\forall\,n\in\Z\}.$$
Therefore,
$$\tau(X_n (v))=X_n(\tau(v)),\quad{\rm where}\,\,X_n\in\{L_n, W_n, I_n, J_n\mid\forall\, n\in\Z\}, v\in V.$$
Thus, $\tau$ is a nonzero $\mathfrak L$-module homomorphism. Since $V$ and $V'$ are irreducible $\mathfrak L$-modules, $\tau$ is an  $\mathfrak L$-module isomorphism.  We complete the proof.
\end{proof}
	
\section{Comparison of tensor product modules with known non-weight modules}
In this section, we compare the tensor product modules constructed in the present paper with all other known non-weight $\mathfrak L$-modules, i.e., restricted modules and $\Phi(\lambda,\a,\b,\r,h(t))$. We fix an irreducible tensor module $\bm{T}$ as defined in \eqref{a11}, where $\lambda_i\in \C^*$ for $i=1, 2, \ldots, m$ with the $\lambda_i$ pairwise distinct.
	
For any $r\in\Z_+, l, k\in\Z$, as in \cite{LLZ}, we denote
$$\omega_{l,k}^{(r)}=\sum_{i=0}^r\binom{r}{i}(-1)^{r-i}L_{l-k-i}L_{k+i}\in U(\mathfrak L).$$
\begin{lemm}\label{theo123456} Keep notations as above. Then the following statements hold.
\begin{trivlist}
\item[(1)] For sufficiently large $n\in\N$, the action of $L_n$ on  $\bm{T}$ is not locally finite.
\item[(2)] For any $f(s,t) \in \Phi(\lambda, \alpha, \b, \r, h(t))$, we have $\omega_{l,k}^{(r)}(f(s,t)) = 0, \forall\,\, l,k\in\Z, r > 4$.
\item[(3)] If $V$ is not the $1$-dimensional trivial module, then for any  $r>4$, there exist $l, k\in\Z$ and $v\in V$ such that $\omega_{l,k}^{(r)}(1\otimes\cdots\otimes1\otimes v)\neq0$.
\item[(4)] Assume that $m\geq2$ and $V$ is the $1$-dimensional trivial module. Then for any $r>4$, there exist $l, k\in\Z$ such that $\omega_{l,k}^{(r)}(1\otimes\cdots\otimes1)\neq0$.
\end{trivlist}
\end{lemm}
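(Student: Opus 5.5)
The plan is to handle the four statements separately. Throughout, $\mathfrak L$ acts on tensor products by the coproduct $X\mapsto X\otimes1+1\otimes X$. For (1) and (2) only the explicit formulas for $L_n$ on $\Phi(\lambda,\a,\b,\r,h(t))$ from Section~\ref{pre} are needed. For (3) and (4) I will expand $\omega_{l,k}^{(r)}$ over pairs of tensor factors and isolate one surviving term by a suitable choice of $l,k$.

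\emph{Part (1).} Fix $0\neq v\in V$ and choose $n\geq K$, where $L_{n'}v=0$ for all $n'\geq K$. Then $L_n$ acts on $1\otimes\cdots\otimes1\otimes v$ only through the $\Phi$-factors. By the formula for $L_n$, on the $i$-th factor $L_n(s_i^p g(t_i))=\lambda_i^n s_i^{p+1}g(t_i)+(\text{terms of lower }s_i\text{-degree})$. Hence $L_n^j(1\otimes\cdots\otimes1\otimes v)$ has total $s$-degree exactly $j$; the top homogeneous part is $\big(\sum_i\lambda_i^n s_i\big)^j\otimes v$, which is nonzero. These vectors are therefore linearly independent, so $L_n$ is not locally finite.

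\emph{Part (2).} I will compute $L_aL_b(f(s,t))$ directly with $a=l-k-i$ and $b=k+i$. Applying $L_b$ and then $L_a$ produces $\lambda^{a+b}=\lambda^l$ times an expression in which $f$ only appears as $f(s-a-b,t)=f(s-l,t)$ and its $t$-derivatives up to order two. This shifted function does not depend on $i$. The remaining coefficients ($s-a+h_{(b)}(t)$, $h_{(a)}(t)$, $b(t-b\a)$, $a(t-a\a)$, and the $\partial_t$ of $h_{(b)}$) are polynomials in $a$ and $b$ of degree at most $2$ each. So $L_{l-k-i}L_{k+i}f$ is a polynomial in $i$ of degree at most $4$, with coefficients in $\Phi$. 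The $r$-th finite difference $\sum_i\binom ri(-1)^{r-i}$ kills every polynomial in $i$ of degree less than $r$, which gives $\omega^{(r)}_{l,k}f=0$ for $r>4$.

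\emph{Part (4).} Here $V$ is trivial. $L_aL_b(1\otimes\cdots\otimes1)$ is a sum of terms in which both operators hit the same factor $\Phi_j$, or $L_b$ hits $\Phi_c$ and $L_a$ hits $\Phi_e$ with $c\neq e$. The same-factor terms have $s$-monomials $s_j^2$, $s_j$ or $1$. I will look at the coefficient of the monomial $s_1s_2$ (with $t$-free part). It comes only from the pairs $(e,c)=(1,2)$ and $(2,1)$. Since $L_a(1)=\lambda^a(s+h_{(a)}(t))$, this coefficient equals
\[\sum_i\tbinom ri(-1)^{r-i}\big(\lambda_1^{l-k-i}\lambda_2^{k+i}+\lambda_2^{l-k-i}\lambda_1^{k+i}\big)=\lambda_1^{l-k}\lambda_2^{k}\big(\tfrac{\lambda_2}{\lambda_1}-1\big)^r+\lambda_2^{l-k}\lambda_1^{k}\big(\tfrac{\lambda_1}{\lambda_2}-1\big)^r .\]
As a function of $k$ (with $l$ fixed) this is a combination of $(\lambda_2/\lambda_1)^k$ and $(\lambda_1/\lambda_2)^k$ with nonzero coefficients. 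Since $\lambda_1\neq\lambda_2$, it cannot vanish for all $k$, and I will choose such a $k$.

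\emph{Part (3).} First I will show that some $L_n$ acts nontrivially on $V$. If all $L_n$ act by $0$, then the brackets $[L_m,W_n]$, $[L_m,I_n]$, $[L_m,J_n]$ would make all of $\mathfrak L$ act trivially (note that $W_0$ and $I_0$ are obtained from $[L_m,W_{-m}]$ and $[L_1,I_{-1}]$), so $V$ would be trivial. Since $V$ is restricted, I can then pick $v$ and a maximal $d$ with $L_dv\neq0$ and $L_{d'}v=0$ for all $d'>d$. Next choose $k$ large, so that $L_{k+i}v=0$ for $0\le i\le r$, and set $l=k+r+d$. In the expansion of $\omega^{(r)}_{l,k}(1\otimes\cdots\otimes1\otimes v)$, consider the terms with $L_{k+i}$ on $\Phi_1$ and $L_{l-k-i}$ on $V$. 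Because $l-k-i=d+r-i\geq d$, only $i=r$ survives, giving $\lambda_1^{k+r}(s_1+h_{(k+r)}(t_1))\otimes1\otimes\cdots\otimes L_dv$. Every other term carries the $V$-component $v$.

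\emph{Main obstacle.} The hardest step is ruling out that this surviving term is cancelled when $L_dv\in\C v$. If $L_dv\notin\C v$, I project onto a complement of $\C v$ in the $V$-factor and read off the coefficient of $s_1$; it is nonzero and done. If $L_dv=cv$ with $c\neq0$, I will vary $k$ instead. The surviving term grows like $\lambda_1^k$ times a polynomial in $k$. The competing terms with $V$-part $v$ come from pairs of $\Phi$-factors, and by the computation of part (4) they have exponential profiles $\lambda_c^{k}\lambda_e^{\,r+d-i}$ in $k$ with $l-k$ fixed. I would then apply Lemma~\ref{prop1} in the variable $k$ to separate the $\lambda_1^k$-component and check that the $s_1$-coefficient attached to $L_dv$ cannot cancel. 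If this fails, the fallback is to replace $v$ by another vector of $V$, for example $L_{d''}v$ for a suitable $d''$, so as to break the proportionality $L_dv\in\C v$.
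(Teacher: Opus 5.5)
Parts (1) and (2) are correct and are the standard arguments behind the references the paper cites in place of a proof. For (1), the top $s$-degree part of $L_n^j(1\otimes\cdots\otimes1\otimes v)$ is $(\sum_i\lambda_i^ns_i)^j$. For (2), $L_{l-k-i}L_{k+i}f$ is a polynomial of degree at most $4$ in $i$, so the $r$-th finite difference kills it. The problems are in (3), where there is a real gap, and in (4), where there is a slip.

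\textbf{Part (3).} The case $L_dv=cv$ with $c\neq0$ is the one you flag as the main obstacle, and the check you propose there can actually fail. With $l-k=r+d$ fixed, consider the cross terms where $L_{k+i}$ acts on $\Phi_1$ and $L_{r+d-i}$ acts on $\Phi_e$, $e\neq1$; here $h_{e,(n)}$ is $h_{(n)}$ built from $h_e,\alpha_e$. These terms contribute $\lambda_1^{k}\sum_{i}\binom ri(-1)^{r-i}\lambda_1^{i}\lambda_e^{r+d-i}h_{e,(r+d-i)}(t_e)$ to the $s_1$-coefficient. That is exactly the same $k$-profile as $c\lambda_1^{k+r}$, so separating the $\lambda_1^k$-component does not isolate the $L_dv$-term. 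For example, take $m=2$ and $h_2\equiv\eta$ constant. Then $h_{2,(n)}=n\eta$, and the contribution is $\lambda_1^k\eta\lambda_2^d(\lambda_1-\lambda_2)^{r-1}\big(d\lambda_1-(r+d)\lambda_2\big)$, which cancels $c\lambda_1^{k+r}$ for a suitable $\eta$. The fallback of replacing $v$ by some $L_{d''}v$ is not an argument.

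In fact no case distinction is needed. With your choice of $k$ and $l$,
\[
\omega^{(r)}_{l,k}(1\otimes\cdots\otimes1\otimes v)=\omega^{(r)}_{l,k}(1\otimes\cdots\otimes1)\otimes v+L_{k+r}(1\otimes\cdots\otimes1)\otimes L_dv,
\]
where the first $\omega^{(r)}_{l,k}$ acts on $\otimes_i\Phi_i$.
\begin{itemize}
\item If $m=1$, the first summand vanishes by part (2). What remains is $\lambda_1^{k+r}(s_1+h_{1,(k+r)}(t_1))\otimes L_dv\neq0$.
\item If $m\geq2$, the second summand has $s$-degree $1$, so the $s_1s_2$-coefficient comes only from the first summand. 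It equals $(\lambda_1-\lambda_2)^r\big(\lambda_1^k\lambda_2^d+(-1)^r\lambda_2^k\lambda_1^d\big)$, which cannot vanish for two consecutive values of $k$ because $\lambda_1\neq\lambda_2$.
\end{itemize}

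\textbf{Part (4).} With $l$ fixed, your coefficient is $A\mu^k+B\mu^{-k}$ with $\mu=\lambda_2/\lambda_1$. But $\mu^k$ and $\mu^{-k}$ are linearly independent only when $\mu\neq\pm1$. If $\lambda_2=-\lambda_1$, the coefficient equals $(-1)^k(-2)^r\lambda_1^l\big(1+(-1)^l\big)$, which is identically zero in $k$ for odd $l$. Either take $l$ even, or vary $l$ instead. For $k=0$ the coefficient is $(\lambda_1-\lambda_2)^r\big((-1)^r\lambda_1^{l-r}+\lambda_2^{l-r}\big)$, which is nonzero for $l$ or for $l+1$ since $\lambda_1\neq\lambda_2$.
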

\begin{proof}
%For any $f(\mathbf{s}, \mathbf{t})\in \bm{T}$ and sufficiently large $n\in\N$, it follows from Proposition \ref{prop2} that ${\rm deg\,}(L_n(f(\mathbf{s}, \mathbf{t})))\succ {\rm deg\,}(f(\mathbf{s}, \mathbf{t}))$, so we have $f(\mathbf{s}, \mathbf{t}), L_n(f(\mathbf{s}, \mathbf{t})), L_n^2(f(\mathbf{s}, \mathbf{t})), \ldots$ are linearly independent. Hence, (1) follows.
Since the proof is similar to those of \cite[Lemma 5.1]{LGW} and \cite[Lemma 4.1]{GLW}, we omit the details.
\end{proof}
\begin{theo}\label{theovvn}
The tensor product module $\bm{T}$ is a new non-weight $\mathfrak L$-module except when $m=1$ and $V$ is the $1$-dimensional trivial module.
\end{theo}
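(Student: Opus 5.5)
The plan is to compare $\bm{T}$, under the standing hypothesis that the $\lambda_i$ are pairwise distinct, with the two known families of non-weight irreducible $\mathfrak L$-modules: irreducible restricted modules, and the modules $\Phi(\lambda,\a,\b,\r,h(t))$. In each comparison we exhibit an invariant that separates the two, using Lemma \ref{theo123456}.

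\emph{Step 1: $\bm{T}$ is not restricted.} Suppose $\bm{T}\cong V'$ for some irreducible restricted module $V'$. Then every vector of $\bm{T}$ is annihilated by $L_n$ for $n$ sufficiently large. In particular, $L_n$ acts locally finitely (indeed locally nilpotently, with $L_n$ killing the relevant vector) for all large $n$. This contradicts Lemma \ref{theo123456}(1), which says the action of $L_n$ on $\bm{T}$ is not locally finite for $n$ sufficiently large. So $\bm{T}$ is never isomorphic to a restricted module, and this holds for every $m\geq 1$ and every $V$. If a more hands-on argument is needed, one can note directly that $W_n(1\otimes\cdots\otimes1\otimes v)=\sum_i\lambda_i^n(t_i-n\a_i)\otimes\cdots\otimes v$ is nonzero for large $n$ when $\a_i\neq0$, and that $J_n$ gives $\sum_i\lambda_i^n\b_i(\cdots)\neq0$ when $\b_i\neq0$. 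By Lemma \ref{prop1} these cannot vanish for all large $n$.

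\emph{Step 2: comparison with $\Phi(\lambda,\a,\b,\r,h(t))$.} Suppose $\bm{T}\cong\Phi(\lambda,\a,\b,\r,h(t))$. By Lemma \ref{theo123456}(2), every operator $\omega^{(r)}_{l,k}$ with $r>4$ annihilates $\Phi$, so it must annihilate $\bm{T}$ as well. We split into the two cases not excluded by the statement. If $V$ is not the trivial module, Lemma \ref{theo123456}(3) gives $l,k$ and $v$ with $\omega^{(r)}_{l,k}(1\otimes\cdots\otimes1\otimes v)\neq0$, which is a contradiction. If $V$ is trivial but $m\geq2$, Lemma \ref{theo123456}(4) gives the same contradiction. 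In the remaining case, $m=1$ and $V$ trivial, we have $\bm{T}=\Phi(\lambda_1,\a_1,\b_1,\r_1,h_1(t_1))$ itself, which is exactly the excluded case. As a backup invariant, we could use $R_{\bm{T}}=m+1$ from Lemma \ref{theoo5}. The analogous computation for $\Phi$ gives rank $2$, which handles $m\geq2$ independently of Lemma \ref{theo123456}(4). This does not help when $m=1$ and $V$ is nontrivial, so there the $\omega$-operators are genuinely needed.

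\emph{Main obstacle.} The main difficulty is hidden inside Lemma \ref{theo123456}, which the paper only cites, and specifically in parts (3) and (4). For part (3), the idea is to choose $v$ with $\mathfrak L_n v=0$ for $n\ge K$. Then $\omega^{(r)}_{l,k}$ applied to $1\otimes\cdots\otimes1\otimes v$ expands, via the coproduct, into terms in which the $L$'s act on $V$ or on the $\Phi$ factors. The terms acting on the $\Phi$ factors contribute expressions that are polynomial in the indices of degree at most $4$ (from the $n^2$ terms in $L_n$), and these are killed by the $r$-th finite difference for $r>4$. What survives is $1\otimes\cdots\otimes1\otimes\omega^{(r)}_{l,k}v$ together with cross terms $L_a\otimes L_b$. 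One then chooses $l,k$ so that the cross terms vanish, using the restrictedness of $V$, while $\omega^{(r)}_{l,k}v\neq0$; the latter relies on the known fact that nontrivial irreducible restricted modules are not annihilated by all $\omega^{(r)}$. For part (4), the cross terms $L_{l-k-i}\otimes L_{k+i}$ between two different factors carry $\lambda_1^{l-k-i}\lambda_2^{k+i}$. The $r$-th difference of this is $\lambda_1^{l-k}\lambda_2^{k}(\lambda_2/\lambda_1-1)^r$ times a polynomial in $k$, which is nonzero because $\lambda_1\neq\lambda_2$. This is where pairwise distinctness enters.
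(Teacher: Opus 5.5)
Your argument is correct and follows the paper's proof. Lemma \ref{theo123456}(1), set against local finiteness of $L_n$ on irreducible restricted modules, rules out restricted modules, and Lemma \ref{theo123456}(2)--(4), with exactly your case split, rules out $\Phi(\lambda,\a,\b,\r,h(t))$. Your two backups are also valid and slightly more elementary: $W_n(1\otimes\cdots\otimes1\otimes v)$ has $t_i$-component with coefficient $\lambda_i^n\neq0$ for all large $n$, so $\bm{T}$ is not even restricted and no condition on $\a_i$ is needed; and $R_{\bm{T}}=m+1$ disposes of the case $m\geq2$.

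The one inaccurate point is your heuristic for Lemma \ref{theo123456}(3), on which the theorem does not depend. The cross term $L_{l-k-i}(1\otimes\cdots\otimes1)\otimes L_{k+i}v$ vanishes by restrictedness only if $L_{k+i}v=0$ for $0\leq i\leq r$, and then $\omega^{(r)}_{l,k}v=0$ as well. That lemma is better proved by isolating the coefficient of $s_j$: under the contrary assumption this gives $\sum_{i=0}^{r}\binom{r}{i}(-1)^{r-i}\lambda_j^{i}L_{N-i}v\in\C v$ for all $N$, which forces $V$ to be trivial.
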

\begin{proof}
Given an irreducible restricted module $S$, the action of $L_n$ on $S$ is locally finite for sufficiently large $n\in\N$ by \cite[Lemma 4.1]{LS}. Together with  Lemma \ref{theo123456}(1), we have $\bm{T}\not\cong S$. From Lemma \ref{theo123456}(2)-(4), we see  that $\bm{T}\not\cong \Phi(\lambda,\a,\b,\r,h(t))$. This completes the proof.
\end{proof}

\end{document}